\ProcessOptions \RequirePackage{amsmath}
\def\ep{\varepsilon}
\newcommand{\D}{\mathbb{D}}
\newcommand{\Fpf}{\mathcal{F}^{\phi}_p}
\newcommand{\Ftwof}{\mathcal{F}^{\phi}_2}
\newcommand{\Fqf}{\mathcal{F}^{\phi}_q}
\newcommand{\Fif}{\mathcal{F}^{\phi}_\infty}
\newcommand{\N}{\mathbb{N}}
\newcommand{\C}{\mathbb{C}}
\newcommand{\R}{\mathbb{R}}
\newcommand{\vp}{\vp}
\newcommand{\og}{\mathrm{O}}
\def\a{\alpha}               \def\g{\gamma}
\def\d{\delta}            
     \def\om{\omega}      
         \def\r{\rho}         \def\z{\zeta}
\DeclareMathOperator{\supp}{supp}
\newtheorem{theorem}{Theorem}
\newtheorem{lemma}[theorem]{Lemma}
\newtheorem{proposition}[theorem]{Proposition}
\newtheorem{corollary}[theorem]{Corollary}
\newtheorem{lettertheorem}{Theorem}
\theoremstyle{definition}
\newtheorem{definition}[theorem]{Definition}
\theoremstyle{remark}
\newtheorem{remark}[theorem]{Remark}
\theoremstyle{remarks}
\newenvironment{Pf}{\noindent{\emph{Proof of}}}{$\hfill\Box$ }
\numberwithin{equation}{section}
\begin{document}
\title[Boundedness of the Bergman projection on an $L^p$ space ]
{Boundedness of the Bergman projection on $L^p$ spaces with exponential weights}

\author{Olivia Constantin}
\address{School of Mathematics, Statistics and Actuarial Science,
University of Kent,
Canterbury, Kent, CT2 7NF,
United Kingdom}
\email{O.A.Constantin@kent.ac.uk}
\address{
Faculty of Mathematics,
University of Vienna,
Nordbergstrasse 15, 1090 Vienna,
Austria}
\email{olivia.constantin@univie.ac.at}

\author{Jos\'e \'Angel Pel\'aez}

\address{Departamento de An´alisis Matem´atico, Universidad de M´alaga, Campus de
Teatinos, 29071 M´alaga, Spain} \email{japelaez@uma.es}

\thanks{The first author was supported in part by the FWF project
P 24986-N25. The second author was supported in part by the Ram\'on y Cajal program
of MICINN (Spain), Ministerio de Edu\-ca\-ci\'on y Ciencia, Spain,
(MTM2011-25502), from La Junta de Andaluc{\'i}a, (FQM210) and
(P09-FQM-4468).}
\date{\today}

\subjclass[2010]{30H20, 47B34}

\keywords{Projections, Bergman spaces, exponential weights, Fock space.}

\begin{abstract}
Let $v(r)=\exp\left(-\frac{\a}{1-r}\right)$ with $\a>0$, and let  $\D$ be  the unit disc in the complex plane. Denote by $A^p_v$ the subspace of  analytic functions
 of $L^p(\D,v)$ and let $P_v$  be
 the orthogonal projection from
$L^2(\D,v)$ onto $A^2_v$.  In 2004, Dostanic revealed the intriguing fact that $P_v$  is bounded from $L^p(\D,v)$ to $A^p_v$ only for $p=2$,
and he posed the related problem of identifying the duals of  $A^p_v$ for $p\ge 1$, $p\neq 2$.
In this paper we propose a solution to this problem by proving that
that $P_v$ is bounded from $\,L^p(\D,v^{p/2})$ to $A^p_{v^{p/2}}$   whenever $1\le p <\infty$, and, consequently, the dual of
 $A^p_{v^{p/2}}$ for $p\ge 1$ can be identified with $A^{q}_{v^{q/2}}$, where $1/p+1/q=1$.
In addition, we also address a similar question on some classes of weighted Fock spaces.
\end{abstract}
\maketitle


\section{Introduction}
\parindent 1em  Let $\D$ be the unit disc in the complex plane,
$dm(z)=\frac{dx\,dy}{\pi}$ be the normalized area measure on $\D$,
and denote by $H(\D)$ the space of all analytic functions in $\D$.
 A function
$\omega:\D\to (0,\infty)$, integrable over $\D$, is called a
\emph{weight function} or simply a
\emph{weight}. It is \emph{radial} if $\omega(z)=\omega(|z|)$ for all $z\in\D$.
\par For
$0<p<\infty$, the weighted  Bergman space $A^p_w$ is the space of
functions $f\in H(\D)$ such that
\begin{displaymath}
\|f\|_{A^p_w}^p=\int_{\D}|f(z)|^p w(z)\,dm(z)<\infty.
\end{displaymath}
We shall write $L^\infty_w$ for the weighted growth space of measurable functions $f$ such that
$$||f||_{L^\infty_w} =\sup_{z\in\D}|f(z)|w(z)<\infty.$$
We denote $A^\infty_w=H(\D)\cap L^\infty_w$.
 For $w(r)=(\a+1)(1-r^2)^\a$, $\a>-1$, we obtain the standard Bergman spaces $A^p_\a$ \cite{HKZ,Zhu}.
\par Whenever $w$  is a continuous weight, the  $||\cdot||_{A^p_w}$ convergence implies
uniform convergence on compact subsets of $\D$, which in particular gives  that $A^p_w$ is a closed subspace of $L^p(\D,w )$.
In particular, the point
evaluations $L_ z$ (at the point $z\in \D$) are bounded linear
functionals on $A^2_w$. Therefore, there are reproducing kernels
$K_ z\in A^2_w$ with $\|L_ z\|=\|K_ z\|_{A^2_w}$ such that
\begin{displaymath}
L_ z f=f(z)=\langle f, K_ z\rangle =\int_{\D} f(\z)\,\overline{K_
z(\z)}\,w(\z)\,dm(\z), \quad f\in A^2_w.
\end{displaymath}
Since $A^2_w$ is a closed subspace of the Hilbert space $L^2_w$, we may consider the
  orthogonal projection $P_w:L^2(\D,w )\to A^2_w$ that is usually called the
 {\em{ Bergman projection}}. It is precisely  the integral operator
\begin{equation}\label{bp1}
P_w(f)(z)=\int_{\D} f(\z)\,\overline{K_
z(\z)}\,w(\z)\,dm(\z), \quad f\in L^2(\D,w ).
\end{equation}
\par The boundedness of projections on $L^p$-spaces   is an intriguing topic  which has  attracted a lot  attention
in recent years \cite{D1,D3,HKZ,ZeyTams2012,Zhu}.
For the class of standard weights, the Bergman projection
$$P_\a(f)(z)=(\a+1)\int_{\D} f(\z)\frac{(1-|\z|^2)^\a}{(1-z\overline{\z})^{2+\a}}\,dm(\z),$$
 is bounded from $L^p(\D,(1-|z|^2)^\a)$ onto $A^p_\a$ if and only if $1<p<\infty$ \cite{HKZ,Zhu}. This result is
 the key for the description of dual spaces $\left(A^p_\a\right)^\star\cong A^q_\a$, $p>1$\, $\frac1p+\frac1q=1$, and can be  used for obtaining
 Littlewood-Paley formulas in this context \cite[Chapter $4$]{Zhu}. Further results along this line were obtained whenever $\om$ is a non (necessarily) radial
 weight such that $\frac{\om(z)}{(1-|z|)^\eta}$ belongs to some class of Bekoll\'e weights $B_p(\eta)$, $p>1$, $\eta>-1$ \cite{AlCo,BB}.
However,  Dostanic \cite{D1,D3} revealed a behaviour of the Bergman projection in the case of Bergman spaces with the  exponential type weights
$w(r)=(1-r^2)^A\exp\left(\frac{-B}{(1-r^2)^\a}\right)$, $A\in\mathbb{R},\,B,\a>0$,
 that
is in stark contrast with the situation encountered on standard Bergman spaces. More precisely, boundedness only occurs for $p=2$.
This phenomenon has implications for duality issues, as pointed out in \cite{D1}, where the description of the dual of $A^p_w$ for $p\neq 2$, with $w$ as
above, is stated as an open problem.
Dostanic's result \cite{D1} was recently
generalized in \cite{ZeyTams2012}  for a larger class of  weights. This behaviour is reminiscent of that
of the Bergman projection on Fock spaces \cite{tung,ZhuFock}. Other similarities between Fock spaces and Bergman spaces with rapidly decreasing weights
were pointed out in \cite{PP,CP}. Inspired by the Fock space setting, we propose a positive solution for the
boundedness of the Bergman projection for the canonical example of a rapidly decreasing weight $w(z)=e^{-\frac{\alpha}{1-|z|}}$,\, $\a>0$.
Our main result is the following:

\begin{theorem}\label{th:5}
Let $v(r)=\exp\left(-\frac{\a}{1-r}\right)$,\,$\a>0$, and $1\le p <\infty$. Then, the Bergman projection
$$P_v(f)(z)=\int_{\D} f(\z)\,K(z,\zeta)v(z)\,dm(\z)$$
is bounded from $L^p\left(\D,v^{p/2}\right)$ to $A^p_{v^{p/2}}$.
Moreover, $P_v: L^\infty_{v^{1/2}}\to A^\infty_{v^{1/2}}$ is bounded.
\end{theorem}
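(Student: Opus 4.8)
The plan is to reduce the weighted $L^p$ boundedness of $P_v$ to a Schur-test estimate on the positive kernel operator with kernel $|K(z,\zeta)|\,v(z)$, after absorbing the weights into the function and the kernel. Writing $g(\zeta)=f(\zeta)v(\zeta)^{1/2}$, one checks that $P_v$ is bounded on $L^p(\D,v^{p/2})$ if and only if the operator
\begin{displaymath}
Tg(z)=v(z)^{1/2}\int_{\D}|K(z,\zeta)|\,v(z)^{1/2}v(\zeta)^{1/2}\,g(\zeta)\,dm(\zeta)
\end{displaymath}
is bounded on $L^p(\D,dm)$, i.e. it suffices to bound the integral operator with the symmetric kernel $H(z,\zeta)=|K(z,\zeta)|\,v(z)^{1/2}v(\zeta)^{1/2}$ on unweighted $L^p$. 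The key structural input is precise two-sided pointwise estimates for the reproducing kernel $K(z,\zeta)$ of $A^2_v$ and for the norm $\|K_z\|_{A^2_v}^2=K(z,z)$; these should have been recorded earlier in the paper (for $v(r)=e^{-\alpha/(1-r)}$ one expects $K(z,z)\asymp v(z)^{-1}(1-|z|)^{-\gamma}$ for an appropriate exponent $\gamma$, together with an off-diagonal decay estimate of the form $|K(z,\zeta)|\le C\,K(z,z)^{1/2}K(\zeta,\zeta)^{1/2}\exp(-c\,d(z,\zeta))$ in a suitable metric $d$).

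The main step is then a Schur test with test function $h(z)=(1-|z|)^{-s}$ for a suitably chosen $s$: one must verify that
\begin{displaymath}
\int_{\D}H(z,\zeta)\,h(\zeta)^{q}\,dm(\zeta)\le C\,h(z)^{q}
\qquad\text{and}\qquad
\int_{\D}H(z,\zeta)\,h(z)^{p}\,dm(z)\le C\,h(\zeta)^{p},
\end{displaymath}
where $1/p+1/q=1$; by symmetry of $H$ the two inequalities are essentially one computation. Plugging in the kernel estimate, $H(z,\zeta)\asymp (1-|z|)^{-\gamma/2}(1-|\zeta|)^{-\gamma/2}\exp(-c\,d(z,\zeta))$, the integral $\int_{\D}(1-|\zeta|)^{-\gamma/2-sq}\exp(-c\,d(z,\zeta))\,dm(\zeta)$ should concentrate, because of the exponential factor, in a region around $z$ of "hyperbolic-type" size comparable to $(1-|z|)^2$, over which $(1-|\zeta|)\asymp (1-|z|)$; this forces a power $(1-|z|)^{2-\gamma/2-sq}$, and one selects $s$ so that $2-\gamma/2-sq=-sq$ fails—rather, so that the exponents match, i.e. $s$ is chosen from $-\gamma/2+2-sq = -s$ after also accounting for the $p$-side, which pins down $s$ uniquely (the symmetric/self-adjoint nature of $H$ makes $s$ independent of $p$, which is exactly why boundedness holds for the full range $1\le p<\infty$ including the endpoints).

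The $p=1$ and $p=\infty$ endpoints are then handled directly rather than by Schur: for $p=\infty$ one estimates $|P_v f(z)|v(z)^{1/2}\le \|f\|_{L^\infty_{v^{1/2}}}\int_{\D}H(z,\zeta)\,dm(\zeta)$ and shows this last integral is bounded uniformly in $z$ (the $s=0$ case of the computation above), while $p=1$ follows by Fubini from the same uniform bound on $\int_{\D}H(z,\zeta)\,dm(z)$. I expect the main obstacle to be the second displayed estimate above—carrying out the off-diagonal integral of the kernel against the power weight with enough precision to see the cancellation of the $(1-|z|)^{-\gamma/2}$ singularity. This requires a genuinely sharp form of the kernel asymptotics (not merely $|K(z,\zeta)|\lesssim K(z,z)^{1/2}K(\zeta,\zeta)^{1/2}$, which alone is too lossy), together with a careful change of variables adapted to the metric $d$ in which the exponential decay is measured; once the correct $d$ and the correct $\gamma$ are in hand, the Schur computation itself is routine.
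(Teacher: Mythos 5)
Your reduction to the operator with symmetric kernel $H(z,\zeta)=|K(z,\zeta)|\,v(z)^{1/2}v(\zeta)^{1/2}$ acting on unweighted $L^p$ is sound, and it is essentially the paper's Lemma \ref{le:est}; note, however, that once one knows $\sup_{z\in\D}\int_\D H(z,\zeta)\,dm(\zeta)<\infty$ (your ``$s=0$ case''), H\"older plus Fubini already give the whole range $1\le p\le\infty$, so the Schur test with $h(z)=(1-|z|)^{-s}$ adds nothing, and your discussion of how $s$ is pinned down (``so that the exponents match'') is in any case not a determinate computation as written.

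The genuine gap is the input you take for granted: two-sided asymptotics for $K(z,z)$ and, above all, an off-diagonal bound of the form $|K(z,\zeta)|\lesssim K(z,z)^{1/2}K(\zeta,\zeta)^{1/2}e^{-c\,d(z,\zeta)}$ in a suitable metric. No such estimates are ``recorded earlier in the paper'', and for $v(r)=e^{-\alpha/(1-r)}$ they are not available by routine means: the kernel is only accessible through the series $K(z,\zeta)=\sum_{n\ge0}(z\bar\zeta)^n/v_{2n+1}$, and proving pointwise exponential off-diagonal decay for it is precisely the hard analytic content of the problem, which your proposal defers rather than supplies (such estimates exist in the Fock setting of Marzo--Ortega-Cerd\`a and Seip--Youssfi, and the paper uses them there, but not for this disc weight). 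The paper avoids pointwise estimates altogether: since $K(z,\zeta)$ depends only on $z\bar\zeta$, passing to polar coordinates reduces $\int_\D H(z,\zeta)\,dm(\zeta)$ to a one-variable integral involving the integral means $M_1(s|z|,K)$; Proposition \ref{pr:intmean} establishes $M_1(r,K)\asymp(1-r)^{-3/2}\exp\bigl(\alpha/(1-\sqrt r)\bigr)$ via Kriete's Legendre--Fenchel/Laplace-transform asymptotics for the moments $v_\lambda$ combined with Hadamard products against smooth polynomials, and the uniform bound then follows from the identity $\frac{1}{1-\sqrt{sr}}-\frac{1}{2(1-r)}-\frac{1}{2(1-s)}=\frac12\left(\frac{\sqrt s-\sqrt r}{1-\sqrt{sr}}\right)\left(\frac{\sqrt r}{1-r}-\frac{\sqrt s}{1-s}\right)$ together with a dyadic decomposition of $[0,1]$ around $s=r$. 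Unless you prove the pointwise kernel estimates you invoke (a substantial piece of work in its own right), your argument does not establish the theorem.
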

As a consequence of Theorem \ref{th:5}  we identify  the dual of $A^{p/2}_{v^{p/2}}$ for $p\ge 1$ as $A^{q/2}_{v^{q/2}}$, where $1/p+1/q=1$.
The approach to prove our main result relies on  accurate  estimates for $$||K_r||_{H^1}=M_1(r,K)=\int_0^{2\pi} |K(re^{it})|\, dt,$$ the integral means of the reproducing kernel of $A^2_{v}$, on circles of radius $r<1$ centered at the origin,
 (see Proposition \ref{pr:intmean} below).
 These are obtained using two key tools;  the sharp asymptotic estimates obtained in \cite{Kriete2003} for the moments
of our weight  in terms of the Legendre-Fenchel transform, and an upper estimate of $M_1(r,K)$
 by the $l^1$-norm of the $H^1$-norms of the Hadamard product of $K_r$ with certain smooth polynomials.

Concerning the unboundedness of the Bergman projection,  we highlight  that the decay of the weight plays a role in this problem.  In fact,
whenever the weight is  smooth and decreases rapidly enough, it  enters into the
framework of the result in \cite[Theorem $1.2$]{ZeyTams2012}, so the Bergman projection  is bounded from  $L^p(\D,w )$ to $A^p_w$ only for $p=2$.
\begin{proposition}\label{th:1}
Assume that $w(r)=e^{-2\phi(r)}$ is a radial weight such that \\$\phi:[0,1)\to \R^+$ is a $C^\infty$-function, $\phi'$ is positive on $[0,1)$, $\lim_{r\to 1^-}\phi(r)=\lim_{r\to 1^-}\phi'(r)=+\infty$  and
 \begin{equation}\label{bp4}\lim_{r\to 1^-} \frac{\phi^{ (n)}(r)}{\left(\phi'(r)\right)^n}=0, \quad\text{for any $n\in\N\setminus\{1\}$.
 }\end{equation}
 Then, the Bergman projection  is bounded from  $L^p(\D,w )$ to $L^p(\D,w )$ only for $p=2$.
\end{proposition}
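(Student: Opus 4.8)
The plan is to deduce the statement from \cite[Theorem 1.2]{ZeyTams2012} by checking that the hypotheses on $\phi$ place the weight $w=e^{-2\phi}$ in the class to which that result applies; once this is done the assertion is immediate, since the unboundedness of $P_w$ on $L^p(\D,w)$ for every $p\ne2$ is precisely its conclusion, while for $p=2$ there is nothing to prove ($P_w$ is the orthogonal projection of the Hilbert space $L^2(\D,w)$ onto its closed subspace $A^2_w$). As the cited theorem is stated for weights of the form $e^{-\varphi}$, I would put $\varphi=2\phi$ and observe that the normalisation is harmless: $\varphi'=2\phi'$ is positive on $[0,1)$, $\lim_{r\to1^-}\varphi(r)=\lim_{r\to1^-}\varphi'(r)=+\infty$, and for every $n\ge2$
\[
\frac{\varphi^{(n)}(r)}{\big(\varphi'(r)\big)^{n}}=2^{1-n}\,\frac{\phi^{(n)}(r)}{\big(\phi'(r)\big)^{n}}\xrightarrow[r\to1^-]{}0 ,
\]
so condition \eqref{bp4} is inherited by $\varphi$.

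If the hypotheses of \cite[Theorem 1.2]{ZeyTams2012} are phrased directly in terms of $\varphi$ and its derivatives, this is essentially the whole argument. If instead they are encoded, as is customary for Bergman (and Fock) spaces with rapidly decreasing weights, through an auxiliary positive function $\tau$, I would choose $\tau(r)=1/\varphi'(r)$ --- a legitimate choice since $\varphi'>0$ and $\varphi'\in C^\infty$ force $\tau\in C^\infty([0,1))$ --- and verify the three properties usually demanded of it. First, $\tau(r)\to0$ as $r\to1^-$, which is immediate from $\varphi'\to+\infty$. Second --- and this is the feature responsible for the dichotomy at $p=2$ --- the weight decays faster than any standard weight, i.e. $\tau(r)=\op(1-r)$ as $r\to1^-$: indeed $\tau'(r)=-\varphi''(r)/\big(\varphi'(r)\big)^2\to0$ by the case $n=2$ of \eqref{bp4}, so, given $\e>0$ and using $\tau(r)\to0$, integration yields $\tau(r)=\big|\int_r^1\tau'(s)\,ds\big|\le\e(1-r)$ for $r$ near $1$; by contrast, for a standard weight $(1-r)^A$ the ratio $\varphi''/(\varphi')^2$ is the non-zero constant $1/A$, which is exactly why such weights lie outside this regime and the Bergman projection behaves differently there. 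Third, $\tau$ satisfies the expected higher-order bounds $\tau^{(k)}(r)=\op\big(\tau(r)^{1-k}\big)$ for all $k\ge1$: an induction on $k$ (equivalently, differentiating $1/\varphi'$ repeatedly) shows that $\tau^{(k)}$ is a finite sum of terms
\[
c\,\frac{\varphi^{(i_1)}(r)\cdots\varphi^{(i_j)}(r)}{\big(\varphi'(r)\big)^{j+1}}=c\,\big(\varphi'(r)\big)^{k-1}\prod_{\ell=1}^{j}\frac{\varphi^{(i_\ell)}(r)}{\big(\varphi'(r)\big)^{i_\ell}},\qquad i_\ell\ge2,\quad\sum_{\ell=1}^{j}(i_\ell-1)=k ,
\]
each of which is $\op\big((\varphi')^{k-1}\big)=\op\big(\tau^{1-k}\big)$ by \eqref{bp4}; here the full range $n\ge2$ of \eqref{bp4} is used, and this is the only genuinely computational point.

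Once these verifications are in place, $w$ lies within the scope of \cite[Theorem 1.2]{ZeyTams2012}, and the Proposition follows. The main obstacle is not a delicate estimate but the bookkeeping needed to align conventions with the cited reference --- radial versus general weights, $e^{-\varphi}$ versus $e^{-2\phi}$, hypotheses stated on $\varphi$ directly versus through $\tau$, and precisely how many derivatives are demanded --- after which \eqref{bp4}, together with $\varphi'\to+\infty$, is seen to furnish each of the required properties; the family of derivative estimates for $\tau$ above is designed so that \eqref{bp4} makes every term it produces of the correct order.
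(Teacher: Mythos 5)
There is a genuine gap: your argument never verifies the actual hypotheses of the theorem you are invoking. The result \cite[Theorem 1.2]{ZeyTams2012}, as used in the paper (Theorem~\ref{th:zey}), is stated in terms of the weight $w$ itself, not in terms of $\phi$ (or of an auxiliary function $\tau=1/\phi'$ as in the Pau--Pel\'aez-type framework you guess at): it requires (i) $w\in C^\infty([0,1))$, (ii) $\lim_{r\to1^-}w^{(n)}(r)=0$ for every $n$, and (iii) for every $n$ the function $(-1)^nw^{(n)}$ is non-negative on some interval $(a_n,1)$. Your proposal hedges between two possible formulations (``directly in terms of $\varphi$'' or ``through $\tau$''), neither of which is the one that actually appears, and it dismisses the remaining work as ``bookkeeping to align conventions.'' That bookkeeping is precisely the content of the proof: one must show that condition \eqref{bp4} forces (ii) and (iii) for $w=e^{-2\phi}$, and in particular the alternating-sign condition (iii) is nowhere addressed in your write-up, nor is the decay $w^{(n)}(r)\to0$.

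Concretely, the paper proves by induction (Fa\`a di Bruno-type bookkeeping with ``products of level $n$'') that
$w^{(n)}=\bigl[(-1)^n2^n(\phi')^n+R_n(\phi)\bigr]e^{-2\phi}$, where $R_n(\phi)$ is a combination of products involving higher derivatives of $\phi$ with the power of $\phi'$ at most $n-1$, so that \eqref{bp4} gives $R_n(\phi)=\op\bigl((\phi')^n\bigr)$; it also shows $(\phi'(r))^ne^{-2\phi(r)}\to0$ as $r\to1^-$ (via L'Hospital's rule and the case $n=2$ of \eqref{bp4}, which yields $\phi/\log\phi'\to\infty$). The first fact identifies the dominant term of $w^{(n)}$ and gives the sign condition (iii), since the bracket tends to $(-1)^n2^n$ and $\phi'>0$; combined with the second fact it gives (ii). Your expansion of the derivatives of $\tau=1/\varphi'$ is similar in flavour (it also exploits \eqref{bp4} for all $n\ge2$), but it is applied to the wrong object and cannot substitute for the computation of $w^{(n)}$; without it, the appeal to \cite[Theorem 1.2]{ZeyTams2012} is not justified. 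To repair the argument, replace the $\tau$-verification by the induction on $w^{(n)}$ sketched above.
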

\par In particular, any weight in \cite[section $7$ ]{PP} satisfies the hypotheses of Proposition \ref{th:1}, as well as triple exponential weights of the form
$
\omega(z)=\exp({-e^{e^\frac{1}{1-|z|}}}).
$
\medskip
\par We also investigate some analogous problems in the setting of Fock spaces.
Given $\phi:\C\rightarrow \R^+$ a (nonharmonic) subharmonic function,
we consider
the weighted Fock spaces,
$$\Fpf=\left\{f\in H(\C):\, ||f||^p_{\Fpf}=\int_{\C}|f(z)|^pe^{-p\phi(z)}\,dm(z) <\infty\right\},\quad 0<p<\infty, $$
and
$$\Fif= H(\C)\cap L^\infty_\phi,$$
where $H(\C)$ is the space of entire functions and $ L^\infty_\phi$ is the
 weighted growth space of measurable functions $f$ such that
$$||f||_{L^\infty_\phi} =\sup_{z\in\C}|f(z)|e^{-\phi(z)}<\infty.$$
 We shall say that $\phi$  is radial if $\phi(z)=\phi(|z|)$ for all $z\in\C$.
\par First, we discuss some positive results. The estimates of the Bergman kernel proved in \cite{SeiYouJGA2011} for a large class of Fock spaces induced by radial weights $e^{-2\phi}$,
respectively those obtained in  \cite{marzo-ortega} for Fock spaces ${\Ftwof}$ with nonradial weights, where $\Delta \phi$  is a doubling measure, imply that the Bergman
projection
\begin{equation}\label{bp7}
P_\phi(f)(a)=\int_{\C} f(z)\,\overline{K_
a(z)}\,e^{-2\phi(z)}\,dm(z),
\quad a\in\C,
\end{equation}
is bounded from $L^p(\C, e^{-p\phi})$ to $\Fpf$ for $1\le p< \infty$, and from $L^\infty_\phi$ to $\Fif$.
 From these results we deduce that the dual of ${\Fpf}$ can be identified with ${\Fqf}$, where $1/p+1/q=1$.

\par  Finally, regarding the unboundedness of the Bergman projection on \lq\lq weighted $L^p$ spaces on $\C$\rq\rq,   we note that the
analogue of
Proposition \ref{th:1} works.

\begin{proposition}\label{th:2}
Assume  that $\phi:[0,\infty)\to \R^+$ is a $C^\infty$-function, $\phi'$ is positive on $[0,+\infty)$ and
$\lim_{r\to \infty}\phi(r)=\lim_{r\to \infty}\phi'(r)=+\infty$.
 If
 \begin{equation}\label{bp4f}\lim_{r\to \infty} \frac{\phi^{(n)}(r)}{\left(\phi'(r)\right)^n}=0, \quad\text{for any $n\in\N\setminus\{1\}$,
 }\end{equation}
 then,  Bergman projection  is bounded from  $L^p(\C,e^{-2\phi} )$ to $L^p(\C,e^{-2\phi} )$ only for $p=2$.
\end{proposition}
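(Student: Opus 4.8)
I would model the proof on the disc case (Proposition~\ref{th:1} / the Zeytuncu-type argument), the only new point being that the moment asymptotics now live on $\C$. Since $\phi$ is radial, $\{z^n\}_{n\ge 0}$ is an orthogonal basis of $\Ftwof$; writing $\mu_s=2\int_0^\infty r^{s+1}e^{-2\phi(r)}\,dr$ (finite for $s>-1$ because $\phi(r)\to\infty$ forces super-polynomial decay of $e^{-2\phi}$), one has $\|z^n\|_{\Ftwof}^2=\mu_{2n}$ and $K(z,w)=\sum_{n\ge 0}\mu_{2n}^{-1}(z\bar w)^n$. By Hölder's inequality the function $h(s):=\log\mu_s$ is convex and $C^\infty$ on $(-1,\infty)$; this is the only structural feature of the weight used besides the asymptotic input below. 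The starting point is that $P_\phi$ acts diagonally on rotation modes: for $1<p<\infty$, $p\neq 2$, $1/p+1/q=1$, I would test $P_\phi$ on $g_m(re^{i\theta})=r^{mq/p}e^{im\theta}$, $m\in\N$ (the Hölder extremizer on the $m$-th mode). Then $\|g_m\|_{L^p(\C,e^{-2\phi})}^p=\mu_{mq}$, and the orthogonality relations $\int_\C z^a\bar z^b e^{-2\phi}\,dm=0$ ($a\neq b$), together with $mq/p+m=mq$, give $P_\phi(g_m)(z)=(\mu_{mq}/\mu_{2m})\,z^m$, so that
\begin{equation*}
\|P_\phi\|_{L^p(\C,e^{-2\phi})\to L^p(\C,e^{-2\phi})}\;\ge\;\sup_{m\in\N}\;\frac{\mu_{mp}^{1/p}\,\mu_{mq}^{1/q}}{\mu_{2m}}.
\end{equation*}
Since $2m=\tfrac1p(mp)+\tfrac1q(mq)$, convexity of $h$ makes this quotient $\ge 1$ (and $\equiv 1$ when $p=2$); integrating $h'$ twice to estimate the convexity defect of $h$ at the point $2m$ upgrades this to
\begin{equation*}
\tfrac1p h(mp)+\tfrac1q h(mq)-h(2m)\;\ge\;\frac{(p-q)^2}{2pq}\;m^2\!\!\!\!\inf_{t\in[m\min(p,q),\,m\max(p,q)]}\!\!\!\! h''(t),
\end{equation*}
so boundedness of $P_\phi$ would be contradicted once I know $s^2h''(s)\to\infty$ as $s\to\infty$ (on the interval above $m\ge t/\max(p,q)$ and $\inf_{\tau\ge s}\tau^2h''(\tau)\to\infty$).

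The inequality $s^2h''(s)\to\infty$ is the crux of the argument, and the step that uses hypothesis~\eqref{bp4f}. One has the exponential-family identity $h''(s)=\operatorname{Var}_{\nu_s}(\log r)$, where $d\nu_s(r)\propto r^{s+1}e^{-2\phi(r)}\,dr=e^{\psi_s(r)}\,dr$ on $(0,\infty)$ and $\psi_s(r)=(s+1)\log r-2\phi(r)$. I would run Laplace's method at the (non-degenerate) saddle $\rho_s$ given by $2\rho_s\phi'(\rho_s)=s+1$: there $-\psi_s''(\rho_s)=2\phi'(\rho_s)/\rho_s+2\phi''(\rho_s)>0$, and \eqref{bp4f} is exactly the condition that forces the cubic and higher Taylor coefficients of $\psi_s$ about $\rho_s$ to be negligible on the scale $(-\psi_s''(\rho_s))^{-1/2}$, so that $\nu_s$ has a Gaussian profile there; in particular $h''(s)\asymp\rho_s^{-2}\bigl(\phi'(\rho_s)/\rho_s+\phi''(\rho_s)\bigr)^{-1}$. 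Since $s\asymp\rho_s\phi'(\rho_s)\to\infty$,
\begin{equation*}
s^2h''(s)\;\asymp\;\frac{\phi'(\rho_s)^2}{\phi'(\rho_s)/\rho_s+\phi''(\rho_s)}\;\ge\;\tfrac12\min\!\bigl(\rho_s\phi'(\rho_s),\ \phi'(\rho_s)^2/\phi''(\rho_s)\bigr)\;\to\;\infty,
\end{equation*}
the two terms diverging because $\phi'\to\infty$ and because of \eqref{bp4f} with $n=2$, respectively. This saddle-point computation is of exactly the type carried out for $v(r)=e^{-\alpha/(1-r)}$ in the body of the paper and in \cite{Kriete2003}; I would state only its outcome, as the verification is routine but lengthy, and this is where I expect essentially all the work (and the main obstacle) to lie.

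Finally, the case $p=1$ is immediate: $P_\phi$ is bounded on $L^2(\C,e^{-2\phi})$, so if it were also bounded on $L^1(\C,e^{-2\phi})$ it would be bounded on every $L^p(\C,e^{-2\phi})$, $1\le p\le 2$, by interpolation, contradicting the above. (Alternatively, for $p=1$ one checks directly that $\|P_\phi\|=\infty$ by letting the mass of a fixed-size test bump $e^{i\theta}\mathbf 1_{R<|z|<R+1}$ escape to infinity.) Thus the whole proof reduces to the single moment estimate $s^2(\log\mu_s)''\to\infty$; everything else is soft.
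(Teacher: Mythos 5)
Your reduction is fine as far as it goes: testing $P_\phi$ on the rotation modes $r^{mq/p}e^{im\theta}$, the identity $P_\phi(g_m)=(\mu_{mq}/\mu_{2m})z^m$, the lower bound $\|P_\phi\|\ge\sup_m\mu_{mp}^{1/p}\mu_{mq}^{1/q}/\mu_{2m}$, and the convexity-defect estimate for $h=\log\mu_s$ are all correct (and this is in fact the mechanism inside Zeytuncu's theorem). But you have pushed the entire content of the proposition into the unproved claim $s^2h''(s)\to\infty$, and the route you sketch for it does not work under the stated hypotheses. Condition \eqref{bp4f} controls $\phi^{(k)}$ only against $(\phi')^k$, i.e.\ on the length scale $1/\phi'$, whereas the Laplace scale at a critical point $\rho_s$ of $\psi_s$ is $(-\psi_s''(\rho_s))^{-1/2}\asymp(\phi'(\rho_s)/\rho_s+\phi''(\rho_s))^{-1/2}$, which is coarser by a factor of order $(\rho_s\phi'(\rho_s))^{1/2}$. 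Hence \eqref{bp4f} does \emph{not} make the cubic and higher Taylor coefficients of $\psi_s$ negligible on that scale, the critical point need not be unique or non-degenerate, and your claimed asymptotics $h''(s)\asymp\rho_s^{-2}\bigl(\phi'(\rho_s)/\rho_s+\phi''(\rho_s)\bigr)^{-1}$ are false for admissible $\phi$. Concretely, take $\phi(r)=r^2+r^{-1}\sin(r^2)$ for $r\ge1$ (smoothly adjusted near $0$): then $\phi'>0$, $\phi'\to\infty$ and $\phi^{(n)}/(\phi')^n=O(1/r)\to0$ for every $n\ge2$, but at points $\rho$ with $2\rho\phi'(\rho)=s+1$ one has $\phi'(\rho)/\rho+\phi''(\rho)=4-4\rho\sin(\rho^2)+O(1/\rho)$, which oscillates with amplitude $\asymp\sqrt{s}$ and can even be negative; meanwhile the true $h''(s)\asymp1/s$, because $e^{-2\phi}=e^{-2r^2}e^{-2\sin(r^2)/r}$ and a density modulation bounded between $e^{-2}$ and $e^{2}$ changes $\mathrm{Var}_{\nu_s}(\log r)$ only by bounded factors. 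So the lower bound $s^2h''(s)\to\infty$ (or unboundedness of the defect), which is exactly the hard part, is not obtained by a ``routine'' saddle-point computation from \eqref{bp4f}; some genuinely different argument is needed, and a pointwise Taylor expansion at the saddle is not it. (A smaller point: your contradiction needs $\inf_{t\in[mp,mq]}m^2h''(t)$ to be unbounded, so divergence of $s^2h''(s)$ along a sequence would not suffice either.)

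For comparison, the paper avoids this analysis altogether: it states the Fock-space analogue of Zeytuncu's theorem (Theorem \ref{th:zeyFock}: for radial $C^\infty$ weights on $\C$ with $w^{(n)}\to0$ and $(-1)^nw^{(n)}\ge0$ near infinity, the projection is $L^p$-bounded only for $p=2$), whose proof is Zeytuncu's with moments over $[0,\infty)$, and then verifies its hypotheses for $w=e^{-2\phi}$ exactly as in Proposition \ref{th:1}: by Lemma \ref{le:bp1}, $w^{(n)}=P_n(\phi)e^{-2\phi}$ with $P_n(\phi)=(-1)^n2^n(\phi')^n+R_n(\phi)$ and $R_n(\phi)/(\phi')^n\to0$ by \eqref{bp4f}, which gives both the decay and the eventual sign-alternation of $w^{(n)}$. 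If you want a self-contained proof along your lines, the missing moment estimate should be extracted from this complete-monotonicity-type information (as Zeytuncu does), not from \eqref{bp4f} directly at a saddle; as it stands, your proposal has a genuine gap at its crucial step.
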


\par Throughout the paper, the letter $C$ will denote an absolute
constant whose value may change at different occurrences. We also
use the notation $a\lesssim b$ to indicate that there is a
constant $C>0$ with $a\leq C b$, and the notation $a\asymp b$
means that $a\lesssim b$ and $b\lesssim a$.
\section{Weighted Bergman spaces}

We begin with a general result that may be known to specialists, but we include a proof for the sake of completeness.
\begin{lemma}\label{le:est}
Assume that $w=e^{-2\phi}$ is a weight such that its associated kernel function $K(z,\zeta)$ satisfies
\begin{equation}\label{bp10b}
M=\sup_{z\in\D}\int_{\D}\left|K(z,\zeta)\right|\,e^{-\phi(\zeta)-\phi(z)}\,dm(\z)<\infty.
\end{equation}
Then $$P_w: L^p(\D,w^{p/2})\to A^p_{w^{p/2}}$$ is bounded for $1\le p<\infty$. Moreover, Moreover, $$P_w: L^\infty_{w^{\frac{1}{2}}}\to A^\infty_{w^{\frac{1}{2}}}$$ is a  bounded operator.
\end{lemma}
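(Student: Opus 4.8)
The plan is to use the classical Schur test. Recall that the Bergman projection $P_w$ is the integral operator with kernel $K(z,\zeta)w(\zeta)=K(z,\zeta)e^{-2\phi(\zeta)}$. To prove boundedness of $P_w$ from $L^p(\D,w^{p/2})$ to $A^p_{w^{p/2}}$, I would first conjugate by the weight: setting $F(z)=f(z)e^{-\phi(z)}$, a function $f$ lies in $L^p(\D,w^{p/2})$ if and only if $F\in L^p(\D,dm)$ with equal norms, since $w^{p/2}=e^{-p\phi}$. Under this substitution $P_w$ becomes (up to the outer weight factor) the integral operator $T$ on $L^p(\D,dm)$ with kernel
\[
H(z,\zeta)=\left|K(z,\zeta)\right|e^{-\phi(z)-\phi(\zeta)}
\]
(after taking absolute values, which only helps for boundedness). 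Thus it suffices to show $T:L^p(\D,dm)\to L^p(\D,dm)$ is bounded for $1\le p<\infty$.

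The key point is that hypothesis \eqref{bp10b} says exactly that $H(z,\zeta)$ has uniformly bounded integral in $\zeta$, i.e.\ $\sup_z\int_\D H(z,\zeta)\,dm(\zeta)\le M$. Since $K(z,\zeta)=\overline{K(\zeta,z)}$ is Hermitian, $H(z,\zeta)=H(\zeta,z)$ is symmetric, so we also get $\sup_\zeta\int_\D H(z,\zeta)\,dm(z)\le M$. For $p=1$ the boundedness of $T$ is then immediate from Fubini/Tonelli (the row-sum bound), and for $p=\infty$ it follows from the column-sum bound; interpolation would already give all $1\le p\le\infty$, but more directly the symmetric Schur test with test function $h\equiv 1$ gives
\[
\int_\D H(z,\zeta)\,h(\zeta)\,dm(\zeta)\le M\,h(z),\qquad \int_\D H(z,\zeta)\,h(z)\,dm(z)\le M\,h(\zeta),
\]
and hence $\|T\|_{L^p\to L^p}\le M$ for every $1\le p<\infty$ (indeed $1\le p\le\infty$). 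Translating back, $\|P_w f\|_{A^p_{w^{p/2}}}\le M\|f\|_{L^p(\D,w^{p/2})}$. The statement that $P_w f$ is analytic is standard: $P_w f$ is a priori defined as a convergent integral, and differentiation under the integral sign (justified by local uniform bounds on $K(z,\cdot)$) shows it is holomorphic, so it lands in $A^p_{w^{p/2}}$. For the $L^\infty$ endpoint one argues directly: if $\|f\|_{L^\infty_{w^{1/2}}}\le 1$, i.e.\ $|f(\zeta)|\le e^{\phi(\zeta)}$, then
\[
|P_w f(z)|\le \int_\D |K(z,\zeta)|\,|f(\zeta)|\,e^{-2\phi(\zeta)}\,dm(\zeta)\le \int_\D |K(z,\zeta)|\,e^{-\phi(\zeta)}\,dm(\zeta)\le M\,e^{\phi(z)},
\]
by \eqref{bp10b}, so $\|P_w f\|_{L^\infty_{w^{1/2}}}\le M$, and holomorphy again follows as above.

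I do not expect a genuine obstacle here, since the lemma is essentially a packaging of the Schur test; the only points requiring care are the bookkeeping with the weight exponents (making sure $w^{p/2}=e^{-p\phi}$ is used consistently so that the conjugated operator has the symmetric kernel $H$) and the routine justification that the image is analytic rather than merely measurable. The substantive content of the paper — verifying that the hypothesis \eqref{bp10b} actually holds for $v(r)=\exp(-\alpha/(1-r))$ via the integral-mean estimates on $\|K_r\|_{H^1}$ — is deferred to later sections and is not needed for this lemma.
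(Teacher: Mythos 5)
Your proposal is correct and is essentially the paper's own argument: the paper's proof is exactly the Schur test with test function $h\equiv 1$, carried out by H\"older's inequality against the measure $|K(z,\zeta)|e^{-\phi(z)-\phi(\zeta)}\,dm(\zeta)$ followed by Fubini, together with the same direct treatment of the $p=1$ and $L^\infty_{w^{1/2}}$ endpoints. (Your labelling of which endpoint uses the row-sum versus the column-sum bound is swapped, but this is immaterial since $|K(z,\zeta)|=|K(\zeta,z)|$ makes the conjugated kernel symmetric.)
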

\begin{proof} For simplicity, we write $P$ instead of $P_w$ throughout the proof.
Using  \eqref{bp1} and H\"{o}lder's inequality we deduce
\begin{equation*}\begin{split}
&\left|Pf(z)e^{-\phi(z)} \right|\le\int_{\D}|f(\z)e^{-\phi(\zeta)}| \left|K(z,\zeta)\right|\,e^{-\phi(\zeta)-\phi(z)} \,dm(\z)
\\ &\le \left(\int_{\D}|f(\z)e^{-\phi(\zeta)}|^p \left|K(z,\zeta)\right|\,e^{-\phi(\zeta)-\phi(z)} \,dm(\z)\right)^{\frac{1}{p}}
 \left( \int_{\D}\left|K(z,\zeta)\right|\,e^{-\phi(\zeta)-\phi(z)} \,dm(\z)\right)^{\frac{1}{p'}}
\\ & \le M^{{\frac{1}{p'}}} \left(\int_{\D}|f(\z)e^{-\phi(\zeta)}|^p\left|K(z,\zeta)\right|\,e^{-\phi(\zeta)-\phi(z)} \,dm(\z)\right)^{\frac{1}{p}}.
\end{split}\end{equation*}
So, by the previous inequality, Fubini's theorem and \eqref{bp10b}, we obtain
\begin{equation*}\begin{split}
&\int_{\D}\left|Pf(z)e^{-\phi(z)} \right|^p \,dm(z)
\\ & \le M^{{\frac{p}{p'}}}\int_{\D}\left(\int_{\D}|f(\z)e^{-\phi(\zeta)}|^p\left|K(z,\zeta)\right|\,e^{-\phi(\zeta)-\phi(z)} \,dm(\z)\right) \,dm(z)
\\ & = M^{{\frac{p}{p'}}}\int_{\D}|f(\z)e^{-\phi(\zeta)}|^p\left(\int_{\D}\left|K(z,\zeta)\right|\,e^{-\phi(\zeta)-\phi(z)} \,dm(z)\right) \,dm(\z)
\\ & \le M^{p}_{L^p(\D,w^{p/2})}.
\end{split}\end{equation*}
The inequality
$$||Pf||_{A^\infty_{w^{\frac{1}{2}}}}\le M ||f||_{L^\infty_{w^{\frac{1}{2}}}},$$
is immediate, while a simple application of Fubini's theorem yields
$$||Pf||_{A^1_{w^{\frac{1}{2}}}}\le M ||f||_{L^1\left(\D,w^{\frac{1}{2}}\right)}.$$
This finishes the proof.
\end{proof}
\par Bearing in mind Lemma \ref{le:est}, Theorem \ref{th:5} will been proved once we get the following inequality
\begin{equation}\label{bpbc10}
\sup_{z\in\D}\int_{\D}\left|K(z,\zeta)\right|\,\exp\left(-\frac{\a}{2(1-|z|)}-{\frac{\a}{2(1-|\zeta|)}}\right)\,dm(\z)<\infty,
\end{equation}
where $K$ is the reproducing kernel of the weighted Bergman space $A^2\left(\exp\left(-\frac{\a}{(1-|z|)}\right)\right)$.
It is known that
$$K(z,\zeta)=\sum_{n=0}^\infty \frac{(z\bar{\zeta})^n}{v_{2n+1}},$$
where, for any positive $\lambda$, $v_\lambda=\int_0^1 r^{\lambda}\exp\left(-\frac{\a}{1-r}\right)\,dr$.
Our approach begins with writing  the integral in \eqref{bpbc10} in polar coordinates. So the next result will be a key ingredient in this procedure.
Its proof will be presented in Section \ref{imrk}.
\begin{proposition}\label{pr:intmean}
Let $v(r)=\exp\left(-\frac{\a}{1-r}\right)$,\,\,$\a>0$, and let $K(z)=\sum_{n=0}^\infty \frac{z^n}{v_{2n+1}}$. Then, there is a positive constant
$C$ such that
$$M_1(r,K)\asymp \frac{\exp\left(\frac{\a}{1-\sqrt{r}}\right)}{(1-r)^{\frac32}},\quad r\to 1^-,$$
where
$$M_1(r,K)=\int_0^{2\pi} |K(re^{it})|\, dt, \quad 0<r<1.$$
\end{proposition}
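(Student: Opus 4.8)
The whole argument revolves around the moments $v_\lambda=\int_0^1 r^\lambda\exp(-\tfrac{\a}{1-r})\,dr$. Substituting $r=1-u$ and applying Laplace's method — equivalently, invoking the sharp asymptotics of Kriete and MacCluer phrased through the Legendre--Fenchel transform of $\tfrac{\a}{2(1-r)}$ — one gets $v_\lambda\asymp\lambda^{-3/4}e^{-2\sqrt{\a\lambda}}$ as $\lambda\to\infty$, so that $a_n:=1/v_{2n+1}\asymp(2n+1)^{3/4}e^{2\sqrt{\a(2n+1)}}$; moreover the same analysis controls all derivatives of $\log v_\lambda$, which are asymptotically governed by those of $-2\sqrt{\a\lambda}$, e.g. $(\log v_\lambda)'=-\sqrt{\a/\lambda}+O(\lambda^{-1})$ and $(\log v_\lambda)''\asymp\lambda^{-3/2}$. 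The lower estimate is then essentially free: since $a_nr^n$ is the $n$-th Fourier coefficient of $t\mapsto K(re^{it})$, we have $M_1(r,K)\ge 2\pi\sup_n a_nr^n$, and maximising $\tfrac34\log(2n+1)+2\sqrt{\a(2n+1)}+n\log r$ over $n$ puts the maximum at $n_*=n_*(r)\asymp(1-r)^{-2}$, with value $\asymp\frac{e^{2\a/\log(1/r)}}{\log(1/r)^{3/2}}\asymp\frac{e^{\a/(1-\sqrt r)}}{(1-r)^{3/2}}$; the last comparison uses $\log\tfrac1r\asymp 1-r$ together with $\tfrac{\a}{1-\sqrt r}-\tfrac{2\a}{1-r}=-\tfrac{\a}{1+\sqrt r}$, which stays bounded.

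For the matching upper bound I would fix, once and for all, a smooth partition of unity $\sum_\nu\widehat{Q_\nu}\equiv 1$ on $\Z_{\ge0}$ at dyadic scales, with $\supp\widehat{Q_\nu}\subset[2^{\nu-1},2^{\nu+1}]$ and each $Q_\nu$ a smooth polynomial (so its kernel has $L^1(\T)$-norm $\asymp 1$), and write, with $\odot$ the Hadamard product,
\[
M_1(r,K)=\|K_r\|_{H^1}\le\sum_\nu\|K_r\odot Q_\nu\|_{H^1}.
\]
The coefficients of $K_r\odot Q_\nu$ are $\widehat{Q_\nu}(n)\,a_nr^n$, and the crucial point is that $a_nr^n=c(n)$ for the \emph{smooth} function $c(\xi)=r^\xi/v_{2\xi+1}$: by the differentiated moment asymptotics, $(\log c)'(\xi)=\log r+2\sqrt{\a/(2\xi+1)}+O(\xi^{-1})$ vanishes only at $\xi=n_*$, $(\log c)''(\xi)\asymp-(2\xi+1)^{-3/2}$, and the higher derivatives are $O\big((2\xi+1)^{1/2-k}\big)$. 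Hence near $\xi\asymp n_*$ the sequence $c(n)$ is a smooth hump supported essentially on an interval of length $w\asymp(1-r)^{-3/2}$ $(\ll n_*)$ and of height $c(n_*)\asymp\frac{e^{\a/(1-\sqrt r)}}{(1-r)^{3/2}}=:H$, and by concavity of $\log c$ it decays so fast away from $n_*$ that $\sum_{|n-n_*|\ge n_*/2}c(n)=o(H)$. Since the dyadic block $\nu_*$ containing $n_*$ has width $\asymp n_*\gg w$, it contains the whole hump; for $\nu\in\{\nu_*-1,\nu_*,\nu_*+1\}$ the sequence $\widehat{Q_\nu}(n)c(n)$ is (a truncation of) a smooth bump on scale $w$, and Poisson summation followed by two integrations by parts gives $\|K_r\odot Q_\nu\|_{H^1}\lesssim\|\widehat{Q_\nu}\,c\|_{\ell^\infty}\lesssim H$; for every other $\nu$ one uses the crude $\|K_r\odot Q_\nu\|_{H^1}\le\|\widehat{Q_\nu}\,c\|_{\ell^1}$ and the rapid decay of $c$, so these terms sum to $\lesssim\sum_{|n-n_*|\ge n_*/2}c(n)=o(H)$. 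Adding up, $M_1(r,K)\lesssim H$, and with the lower bound this is the proposition.

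The technical heart, and the step I expect to be the main obstacle, is the per-block inequality $\|K_r\odot Q_\nu\|_{H^1}\lesssim\|\widehat{Q_\nu}\,c\|_{\ell^\infty}$. This is exactly where the full strength of the Kriete--MacCluer asymptotics — \emph{including the bounds on the derivatives of $\log v_\lambda$} — is needed: they are what guarantee that the multiplier $n\mapsto a_nr^n$ varies slowly across a dyadic block (indeed across the much shorter scale $w$), which is what lets the smooth polynomial $Q_\nu$ genuinely localise $K_r$ to a single Gaussian-type hump whose $H^1$-norm is comparable to its \emph{height} $H$ rather than to its far larger $\ell^1$-mass $\asymp K(r)\asymp\frac{e^{\a/(1-\sqrt r)}}{(1-r)^{3}}$. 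One should note here that a decomposition into blocks of width $\asymp\sqrt n$ — the scale on which $a_n$ alone varies — would be too fine: the hump then straddles $\asymp(1-r)^{-1/2}$ blocks and the triangle inequality loses a factor $(1-r)^{-1/2}$, so the use of sufficiently wide (dyadic) blocks is essential.
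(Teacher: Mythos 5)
Your overall architecture is essentially the paper's: the moment asymptotics $v_\lambda\asymp\lambda^{-3/4}e^{-2\sqrt{\a\lambda}}$ obtained from Kriete's Laplace/Legendre--Fenchel machinery (the paper's Lemma on $v_{\lambda,\log^n}$, which is introduced precisely to control derivatives of the coefficient function $c(\xi)=r^{\xi}/v_{2\xi+1}$), the lower bound by testing a single Taylor coefficient of $K_r$ near $\xi\asymp(1-r)^{-2}$, and the upper bound by a smooth dyadic (de la Vall\'ee Poussin type) Hadamard decomposition with a per-block bound by the sup of the coefficients on the block. Your two departures from the paper are in the execution: for the peak blocks you propose Poisson summation plus two integrations by parts, where the paper instead quotes Pavlovi\'c's smooth-multiplier theorem with constant governed by $\max|\Phi_n|+\max|\Phi_n''|\lesssim M_n$; and for the off-peak blocks you use the crude $\ell^1$ bound plus the Gaussian-type tail of the majorant $e^{h}$, where the paper compares block sups with block averages and applies Kriete's Corollary 1 a second time to get $\sum_n n^{-1/4}e^{2\sqrt{\a n}}\rho^n\asymp e^{\a/(1-\rho)}/(1-\rho)$, $\rho=\sqrt r$. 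The lower bound and your off-peak tail estimate are fine (the tail bound follows from the explicit form of $h$, not really from concavity).

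The gap is exactly where you predicted it, and it is a genuine one as written: your peak-block step needs $|(\log v_\lambda)''|\lesssim\lambda^{-3/2}$ (equivalently $|c''|\lesssim H w^{-2}$ near $n_*$, with $w\asymp(1-r)^{-3/2}$ and $H$ the hump height), and you assert this by saying ``the same analysis controls all derivatives of $\log v_\lambda$''. But this does not follow from the moment asymptotics you actually derive: $(\log v_\lambda)''=\frac{v_{\lambda,\log^2}}{v_\lambda}-\bigl(\frac{v_{\lambda,\log}}{v_\lambda}\bigr)^{2}$ is a difference of two quantities each of size $\asymp\lambda^{-1}$, and $\asymp$-level asymptotics for the individual moments (which is all that Kriete's Theorem 1, and the paper's Lemma, provide) cannot detect the cancellation down to $\lambda^{-3/2}$. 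With only the available bounds $|(\log v_\lambda)''|\lesssim\lambda^{-1}$ and $|(\log c)'|\lesssim 1-r$ on the peak block, the Poisson-summation estimate gives $\|K_r\ast Q_{\nu_*}\|_{H^1}\lesssim H\,(1-r)^{-1/2}$ rather than $\lesssim H$, so the step does not close as justified. The assertion itself is true, but it needs its own proof: either second-order Laplace asymptotics with explicit constants and error terms, or the variance representation $(\log v_\lambda)''=\operatorname{Var}_{\mu_\lambda}\bigl(\log\tfrac1r\bigr)$, $d\mu_\lambda=r^\lambda v(r)\,dr/v_\lambda$, together with a concentration estimate for $\log(1/r)$ at scale $\lambda^{-3/4}$. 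Alternatively you can bypass it the way the paper does: bound \emph{every} block by its sup $M_n$ via the multiplier theorem, whose hypothesis $\max|F_n|+\max|F_n''|\lesssim M_n$ uses only the cancellation-free ratio estimates $v_{\lambda,\log}/v_\lambda\asymp\lambda^{-1/2}$ and $v_{\lambda,\log^2}/v_\lambda\asymp\lambda^{-1}$, and then sum the off-peak sups by the sup-to-average comparison and a second application of Kriete's series asymptotics, which lands at $e^{\a/(1-\rho)}/(1-\rho)\lesssim H$.
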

\begin{Pf}{\em{Theorem \ref{th:5}}.}
By Proposition \ref{pr:intmean},
\begin{equation}\begin{split}\label{bp111}
&\int_{\D}\left|K(z,\zeta)\right|\,\exp\left(-\frac{\a}{2(1-|z|)}-{\frac{\a}{2(1-|\zeta|)}}\right)\,dm(\z)
\\ & \asymp \int_0^1 \exp\left(\left(\frac{\a}{1-\sqrt{s|z|}}\right)-\frac{\a}{2(1-|z|)}-{\frac{\a}{2(1-s)}}\right)
\,\frac{sds}{(1-\sqrt{s|z|})^{3/2}}
\end{split}\end{equation}
Put $r=|z|$  and denote the integrand in the last relation above by $f_r(s)$.
Now, a calculation shows that
\begin{equation*}\begin{split}\label{eq:ident2}
\left(\frac{1}{1-\sqrt{sr}}\right)-\frac{1}{2(1-r)}-{\frac{1}{2(1-s)}}
=\frac{1}{2}\left[\left(\frac{\sqrt{s}-\sqrt{r}}{1-\sqrt{sr}}\right)\left(\frac{\sqrt{r}}{1-r}-\frac{\sqrt{s}}{1-s}\right)\right].
\end{split}\end{equation*}
In particular, the above expression is negative.
We assume without loss of generality $r> \frac{1}{2}$.
We provide an upper bound for the last integral in \eqref{bp111} by means of a sum of integrals on four subintervals of $[0,1]$.
Let us first notice that
\begin{equation}\label{p1}
\int_0^{1/2} f_r(s) \,ds\le \frac{1}{2(1-\frac{1}{\sqrt 2})^{3/2}}.
\end{equation}
and that
\begin{equation}\label{p2}
\int_{r-(1-r)^{3/2}}^{r+(1-r)^{3/2}} f_r(s)\, ds\le \frac{1}{(1-\sqrt r)^{3/2}} \int_{r-(1-r)^{3/2}}^{r+(1-r)^{3/2}}\, ds\le C.
\end{equation}
It remains to deal with the integrals $\int_{r+(1-r)^{3/2}}^1 f_r(s)\,ds$ and $\int_{1/2}^{r-(1-r)^{3/2}}f_r(s)\,ds$.
If $N=N(r)$ is the largest positive integer such that
$$
2^{2N}<\frac{1}{1-r},
$$
then
\begin{equation}\label{sumN}
\int_{r+(1-r)^{3/2}}^1 f_r(s)\,ds= \sum_{k=0}^N \int_{r+2^k(1-r)^{3/2}}^{\min(r+2^{k+1}(1-r)^{3/2},1)}f_r(s)\,ds.
\end{equation}
For $r+2^k(1-r)^{3/2}\le s\le\min(r+2^{k+1}(1-r)^{3/2},1)$ we have
\begin{equation}\label{pp1}
\frac{\sqrt{s}-\sqrt{r}}{1-\sqrt{sr}}=\frac{1+\sqrt{sr}}{\sqrt{s}+\sqrt{r}}\frac{s-r}{1-sr}\ge C\frac{s-r}{1-r^2}\ge C\, 2^k (1-r)^{1/2},
\end{equation}
and, by the mean value theorem, there exists $r<x<s$ such that
\begin{equation}\label{pp2}
\frac{\sqrt{s}}{1-s}-\frac{\sqrt{r}}{1-r} =\frac{1+x}{2\sqrt{x}(1-x)^2}(s-r)\ge C \frac{2^k(1-r)^{3/2}}{(1-r)^2}=C\, 2^k (1-r)^{-1/2}.
\end{equation}
Using (\ref{pp1}) and (\ref{pp2}) in (\ref{sumN}) we obtain
\begin{eqnarray}\label{p3}
\int_{r+(1-r)^{3/2}}^1 f_r(s)\,ds&\le& \sum_{k=0}^N \int_{r+2^k(1-r)^{3/2}}^{\min(r+2^{k+1}(1-r)^{3/2},1)} e^{-C4^k}\frac{1}{(1-\sqrt{sr})^{3/2}}\,ds\nonumber\\
&\le& \sum_{k=0}^N \int_{r+2^k(1-r)^{3/2}}^{r+2^{k+1}(1-r)^{3/2}} e^{-C4^k}\frac{1}{(1-r)^{3/2}}\,ds\nonumber\\
&\le&\sum_{k=0}^\infty 2^k e^{-C4^k}<\infty.
\end{eqnarray}

Finally, if $N$ is the biggest positive integer such that
$$
2^N<\frac{r-1/2}{(1-r)^{3/2}},
$$
we have
\begin{equation}\label{p21}
\int_{1/2}^{r-(1-r)^{3/2}} f_r(s)\,ds\le\sum_{k=0}^N \int_{r-2^{k+1}(1-r)^{3/2}}^{r-2^k(1-r)^{3/2}}f_r(s)\,ds.
\end{equation}
Consider now $s\in[r-2^{k+1}(1-r)^{3/2},r-2^k(1-r)^{3/2}]$. Then
\begin{eqnarray}\label{pp21}
\frac{\sqrt{r}-\sqrt{s}}{1-\sqrt{sr}}&\ge& C \frac{r-s}{1-sr}\ge C \frac{2^k(1-r)^{3/2}}{1-r(r-2^{k+1}(1-r)^{3/2})}\\
&=& C \frac{2^k(1-r)^{3/2}}{(1-r)(1+r+r2^{k+1}(1-r)^{1/2})}\nonumber\\
&\ge& C \frac{2^k(1-r)^{1/2}}{2+2^{k+1}(1-r)^{1/2}}\ge C \frac{2^k(1-r)^{1/2}}{1+2^{k}(1-r)^{1/2}}.\nonumber
\end{eqnarray}
Moreover, we use the fact that the function $x\mapsto \frac{\sqrt{x}}{1-x}$ is increasing for $x>0$ to deduce
\begin{eqnarray}\label{pp22}
\frac{\sqrt{r}}{1-r}-\frac{\sqrt{s}}{1-s}&\ge& \frac{\sqrt{r}}{1-r} -\frac{\sqrt{r-2^k(1-r)^{3/2}}}{1-r+2^k(1-r)^{3/2}}\\
&=&\frac{\sqrt{r}(1+2^k(1-r)^{1/2})-\sqrt{r-2^k(1-r)^{3/2}} }{(1-r)(1+2^k(1-r)^{1/2})}\nonumber\\
&\ge& \frac{2^{k-1}(1-r)^{-1/2}}{1+2^k(1-r)^{1/2}}, \nonumber
\end{eqnarray}
since
\begin{eqnarray*}
\sqrt{r}(1+2^k(1-r)^{1/2})-\sqrt{r-2^k(1-r)^{3/2}}&=&\frac{(1-r)^{1/2} [r 2^{k+1}+r2^{2k}(1-r)^{1/2}+2^k(1-r)]}{\sqrt r(1+2^k(1-r)^{1/2})+\sqrt{r-2^k(1-r)^{3/2}}}\\
&\ge& (1-r)^{1/2}\frac{r 2^{k+1}+r2^{2k}(1-r)^{1/2}}{2+2^k(1-r)^{1/2}}\\
&\ge& 2^{k-1}(1-r)^{1/2}.
\end{eqnarray*}
Combining  relations (\ref{p21})-(\ref{pp22}) we obtain
\begin{eqnarray}\label{li3}
&&\int_{1/2}^{r-(1-r)^{3/2}} f_r(s)\,ds\\
&\le& 2^{3/2}\sum_{k=0}^N \exp\left(-C \frac{2^{2k}} {(1+2^k(1-r)^{1/2})^2}\right) \int_{r-2^{k+1}(1-r)^{3/2}}^{r-2^k(1-r)^{3/2}} \frac{1}{(1-sr)^{3/2}}\,ds\nonumber\\
&\le & \sum_{k=0}^N \exp\left(-C \frac{2^{2k}} {(1+2^k(1-r)^{1/2})^2}\right)  \frac{2^{k+3/2}}{(1+2^k(1-r)^{1/2})^{3/2}}\nonumber\\
&\le & 2^{5/2} \int_{\frac{1}{2}}^{\frac{1}{(1-r)^{3/2}}} e^{-\frac{Cx^2}{(1+(1-r)^{1/2}x)^2}} \frac{1}{(1+(1-r)^{1/2}x)^{3/2}}\, dx,\nonumber
\end{eqnarray}
where the last step follows in view of the inequalities
$$
\frac{2^k}{(1+2^k(1-r)^{1/2})^{3/2}}\le 2\int_{2^{k-1}}^{2^k}\frac{dx}{(1+(1-r)^{1/2}x)^{3/2}}
$$
and
$$
\frac{2^{2k}} {(1+2^k(1-r)^{1/2})^2}\ge \frac{2^{2k}}{4(1+2^{k-1}(1-r)^{1/2})^2}\ge \frac{x^2}{4(1+(1-r)^{1/2}x)^2}, \quad 2^{k-1}\le x\le 2^k.
$$
Now
\begin{equation}\begin{split}\label{li1}
&\int_{1/2}^{(1-r)^{-1/2}} \exp\left(-\frac{Cx^2}{(1+(1-r)^{1/2}x)^2}\right) \frac{dx}{(1+(1-r)^{1/2}x)^{3/2}}
\\ & \le \int_{1/2}^{(1-r)^{-1/2}} e^{-\frac{Cx^2}{4}}\,dx\le \int_{1/2}^\infty  e^{-\frac{Cx^2}{4}}\,dx <\infty.
\end{split}\end{equation}
On the other hand, performing the substitution $t=(1-r)^{1/2}x$ we obtain
\begin{eqnarray}\begin{split}\label{li2}
&\int_{(1-r)^{-1/2}}^{(1-r)^{-3/2}} \exp\left(-\frac{Cx^2}{(1+(1-r)^{1/2}x)^2}\right) \frac{dx}{ (1+(1-r)^{1/2}x)^{3/2} }
\\ &=\int_1^{1/(1-r)} \exp\left({\frac{-Ct^2}{(1-r)(1+t)^2}}\right) \frac{dt}{\sqrt{1-r}(1+t)^{3/2}}\nonumber
\\ &\le (1-r)^{-1/2} e^{-\frac{C}{4(1-r)} } \int_1^{1/(1-r)} \frac{dt}{(1+t)^{3/2}}\rightarrow 0,
\end{split}\end{eqnarray}
as $r\rightarrow1$.
Taking into account relations (\ref{li1})-(\ref{li2}) we now return to (\ref{li3}) to deduce
\begin{equation}
\sup_{1/2<r<1}\int_{1/2}^{r-(1-r)^{3/2}} f_r(s)\,ds<\infty,
\end{equation}
and, with this, the proof is complete.

\end{Pf}

\begin{remark}\label{sublinear}
It follows from the proof of Theorem \ref{th:5} that the sublinear operator
$$
\tilde P_v(f)(z)=\int_{\C} |f(\z)|\,\left|K(z,\zeta)\right| v(z)\,dm(\z)
$$
is bounded from $L^p\left(\D, v^{p/2}\right)$ to $A^p_{v^{p/2}}$ for $p\ge 1$, respectively from $L_{v^{1/2}}^\infty$ to $A_{v^{1/2}}^\infty$.
\end{remark}

\subsection{Integral means of the reproducing kernel. }\label{imrk}
 We  need some preliminary results.

\begin{lemma}\label{moments}
For each $\a,\lambda \in (0,\infty)$ and $n\in\N\cup\{0\}$, let us consider
\begin{equation*}\begin{split}
 v_{\lambda,\log^n} &=\int_0^1 r^{\lambda}\left(\log\frac{1}{r}\right)^n\exp\left(-\frac{\a}{1-r}\right)\,dr.
\end{split}\end{equation*}
Then,
 \begin{equation*}\begin{split}
 v_{\lambda,\log^n} &\asymp \lambda^{-\frac{2n+3}{4}}\exp\left(-2\sqrt{\a \lambda}\right),\quad \lambda \to\infty,
\end{split}\end{equation*}
where the constants involved may depend on $\a$ and $n$.
\end{lemma}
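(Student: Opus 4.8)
The plan is to estimate the integral $v_{\lambda,\log^n}$ by Laplace's method, exploiting the fact that the integrand is sharply peaked near a single interior point of $(0,1)$ as $\lambda\to\infty$. Write $r=e^{-t}$, so $\log\frac1r=t$ and the integral becomes $\int_0^\infty e^{-\lambda t}t^n\exp\!\left(-\frac{\a}{1-e^{-t}}\right)e^{-t}\,dt$; equivalently, keep the variable $r$ and view the integrand as $t^n\exp\bigl(\lambda\log r-\frac{\a}{1-r}\bigr)$ times the smooth factor $(\log\frac1r)^n/\bigl(\text{something}\bigr)$. The exponent $g_\lambda(r)=\lambda\log r-\frac{\a}{1-r}$ has derivative $g_\lambda'(r)=\frac{\lambda}{r}-\frac{\a}{(1-r)^2}$, which vanishes at the critical point $r_\lambda$ determined by $\lambda(1-r_\lambda)^2=\a r_\lambda$. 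As $\lambda\to\infty$ one finds $1-r_\lambda\sim\sqrt{\a/\lambda}$, hence $r_\lambda\to 1$; a more careful expansion gives $1-r_\lambda=\sqrt{\a/\lambda}+O(1/\lambda)$. Plugging into $g_\lambda$, using $\log r_\lambda=\log(1-(1-r_\lambda))=-(1-r_\lambda)+O((1-r_\lambda)^2)$, yields
\[
g_\lambda(r_\lambda)=-\lambda(1-r_\lambda)-\frac{\a}{1-r_\lambda}+O(1)=-2\sqrt{\a\lambda}+O(1),
\]
which already produces the exponential factor $e^{-2\sqrt{\a\lambda}}$.

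Next I would carry out the Gaussian (second-order) part of Laplace's method to extract the polynomial factor $\lambda^{-(2n+3)/4}$. Compute $g_\lambda''(r)=-\frac{\lambda}{r^2}-\frac{2\a}{(1-r)^3}$; at $r=r_\lambda$, using $1-r_\lambda\asymp\lambda^{-1/2}$, the dominant term is $-2\a/(1-r_\lambda)^3\asymp-\lambda^{3/2}$, so $|g_\lambda''(r_\lambda)|\asymp\lambda^{3/2}$. The standard Laplace estimate then gives
\[
v_{\lambda,\log^n}\asymp (\log\tfrac1{r_\lambda})^n\cdot e^{g_\lambda(r_\lambda)}\cdot\Bigl(\frac{2\pi}{|g_\lambda''(r_\lambda)|}\Bigr)^{1/2}
\asymp \lambda^{-n/2}\cdot e^{-2\sqrt{\a\lambda}}\cdot \lambda^{-3/4}=\lambda^{-(2n+3)/4}e^{-2\sqrt{\a\lambda}},
\]
since $\log\frac1{r_\lambda}\asymp 1-r_\lambda\asymp\lambda^{-1/2}$. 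To make the $\asymp$ rigorous rather than heuristic one localizes: split $[0,1]$ into $I_\lambda=[r_\lambda-\d_\lambda,r_\lambda+\d_\lambda]$ for a suitable width $\d_\lambda$ (e.g. $\d_\lambda\asymp\lambda^{-3/4}$, of order $|g_\lambda''|^{-1/2}$) and its complement. On $I_\lambda$ one uses the Taylor expansion $g_\lambda(r)=g_\lambda(r_\lambda)+\frac12 g_\lambda''(\xi)(r-r_\lambda)^2$ together with two-sided bounds on $g_\lambda''$ and on the slowly varying factors $r^{-1}$, $\log\frac1r$ over $I_\lambda$, comparing the resulting integral with a genuine Gaussian integral. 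Off $I_\lambda$ one shows the contribution is negligible: because $g_\lambda$ is concave on a neighborhood of $r_\lambda$ (both terms of $g_\lambda''$ are negative), $g_\lambda(r)\le g_\lambda(r_\lambda)-c\,|g_\lambda''(r_\lambda)|\,\d_\lambda^2+\ldots$ on the boundary of $I_\lambda$ and the integrand decays; one must separately control the regions near $r=0$ (where $\log\frac1r$ blows up but $r^\lambda$ is super-exponentially small) and near $r=1$ (where $\exp(-\a/(1-r))$ forces rapid decay).

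The main obstacle is the rigorous localization, specifically handling the log factor and keeping uniform control of the error terms in the saddle-point expansion over the shrinking interval $I_\lambda$, and verifying that the tails genuinely contribute at lower order. The factor $(\log\frac1r)^n$ is harmless in the end — it is comparable to $(1-r_\lambda)^n$ throughout $I_\lambda$ — but one has to check that it does not interact badly with the region $r$ near $0$; there $r^\lambda\le r_\lambda^\lambda=e^{O(1)}e^{-\sqrt{\a\lambda}(1+o(1))}$ already kills any polynomial or logarithmic growth, after noting $\int_0^{1/2}(\log\frac1r)^n e^{-\a/(1-r)}\,dr<\infty$. A cleaner alternative that avoids some of this bookkeeping is the substitution $r=e^{-u/\sqrt\lambda}$ or $1-r=s/\sqrt\lambda$, which turns the integral into $\lambda^{-1/2}e^{-2\sqrt{\a\lambda}}$ times an integral that converges, by dominated convergence, to a fixed finite constant $\int_0^\infty s^n\exp\bigl(-s-\a/s\bigr)\,\frac{ds}{s^?}$-type expression — one must track the powers of $s$ coming from $(\log\frac1r)^n\asymp(s/\sqrt\lambda)^n$ and from the Jacobian, which is exactly what assembles the exponent $-(2n+3)/4$; the remaining work is then just a dominated-convergence justification plus a matching lower bound via Fatou.
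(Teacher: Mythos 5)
Your main argument is correct, and it takes a genuinely different route from the paper. The paper first replaces $\exp\left(-\frac{\a}{1-r}\right)$ by $\exp\left(-\frac{\a}{\log\frac1r}\right)$ (the two are comparable with constants depending only on $\a$), substitutes $t=\log\frac1r$ to turn $v_{\lambda,\log^n}$ into the exact Laplace transform $\int_0^\infty e^{-v(t)}e^{-(\lambda+1)t}\,dt$ with $v(t)=\frac{\a}{t}-n\log t$, and then invokes Kriete's theorem on Laplace transform asymptotics: with $L$ the Legendre--Fenchel transform of $v$ one computes $e^{-L(x)}\asymp x^{-n/2}e^{-2\sqrt{\a x}}$ and $-L''(x)\asymp x^{-3/2}$, which is precisely your saddle value $g_\lambda(r_\lambda)=-2\sqrt{\a\lambda}+O(1)$ and curvature $|g_\lambda''(r_\lambda)|\asymp\lambda^{3/2}$ packaged into a citation; the factor $(-L'')^{1/2}$ supplies the same $\lambda^{-3/4}$ you extract from the Gaussian width. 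So both proofs rest on the same Laplace-method mechanism; the paper's is shorter because the localization and tail estimates are outsourced to the cited theorem, while yours is self-contained but must actually carry out the splitting into the window $I_\lambda$ of width $\asymp\lambda^{-3/4}$ and the tail bounds. That program does go through: $g_\lambda''(r)=-\frac{\lambda}{r^2}-\frac{2\a}{(1-r)^3}<0$ on all of $(0,1)$, so global concavity controls both tails (including $r$ near $1$), the factor $\left(\log\frac1r\right)^n\asymp(1-r_\lambda)^n\asymp\lambda^{-n/2}$ is essentially constant on $I_\lambda$, and near $r=0$ the integrand is trivially negligible; your exponent bookkeeping $\lambda^{-n/2}\cdot\lambda^{-3/4}$ is the right one.

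One warning about your ``cleaner alternative'': the single rescaling $1-r=s/\sqrt\lambda$ does not produce an integral converging by dominated convergence to a fixed positive constant. After that substitution the exponent is $-\sqrt\lambda\left(s+\frac{\a}{s}\right)+O(1)$ on the relevant range, so the integrand still concentrates on a shrinking window of width $\asymp\lambda^{-1/4}$ around $s=\sqrt\a$; the bookkeeping you describe would yield $\lambda^{-1/2}\cdot\lambda^{-n/2}=\lambda^{-(2n+2)/4}$, which misses the factor $\lambda^{-1/4}$ that only appears after a second rescaling $s=\sqrt\a+\lambda^{-1/4}\tau$. Your primary localization argument does capture this factor correctly, so either drop the alternative or refine it with that second rescaling.
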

\begin{proof}
Let us observe that
 \begin{equation}\label{comp}
 \exp\left(-\frac{\a}{1-r}\right)\le \exp\left(-\frac{\a}{\log\frac{1}{r}}\right)\le \exp\left(\a-\frac{\a}{1-r}\right) ,\quad 0<r<1,
 \end{equation}
  so it is enough to estimate
$$\tilde{w}_{\lambda,\log^n}=\int_0^1 r^{\lambda}\left(\log\frac{1}{r}\right)^n\exp\left(-\frac{\a}{\log\frac{1}{r}}\right)\,dr.$$
A change of variables $\log\frac{1}{r}=t$,
$$\tilde{w}_\lambda=\int_0^\infty e^{-\frac{\a}{t}-(\lambda+1) t+n\log t}\,dt.$$
 Then, take  $v(t)=\frac{\a}{t}-n\log t$ and consider its Legendre-Fenchel transform
\begin{equation}\label{legendre}
L(x)=\inf_{0<t<\infty}\left[v(t)+xt\right].
\end{equation}
A simple calculation shows that $L(x)=\sqrt{n^2+4\a x}+n\log\frac{2x}{\sqrt{n^2+4\a x}+n}$, so
$$e^{-L(x)}\asymp x^{-\frac{n}{2}}\exp\left(-2\sqrt{\a x}\right),\quad x\to\infty.$$
Moreover,
$$-L''(x)\asymp
x^{-\frac{3}{2}},\quad x \to\infty.$$
 So by \cite[Theorem $1$]{Kriete2003} we obtain the estimate for  $v_{\lambda,\log^n}$. This finishes the proof.
\end{proof}
Now let us consider two sequence of polynomials  with useful properties with respect to smooth partial sums.
\par Firstly, given a  $C^\infty$-function $\Phi:\mathbb{R}\to\C$  with compact
support $\supp(\Phi)$, we set
    $$
    A_{\Phi,m}=\max_{s\in\mathbb{R}}|\Phi(s)|+\max_{s\in\mathbb{R}}|\Phi^{(m)}(s)|,
    $$
and we consider the polynomials
    $$
    W_n^\Phi(e^{i\theta})=\sum_{k\in\mathbb
    Z}\Phi\left(\frac{k}{n}\right)e^{ik\theta},\quad n\in\N.
    $$
    For any function $f(z)=\sum_{k=0}^\infty a_k z^k\in H(\D)$, we write $$(W_n^\Phi\ast f)(z)=\sum_{k=0}^\infty \Phi\left(\frac{k}{n}\right) a_kz^k$$
for the Hadamard product of $W_n^\Phi$ and $f$.
With this notation we can state the next result, which follows from the considerations in
\cite[p.$111$-$113$]{Pabook}. We denote by $H^p$ the classical Hardy spaces on $\D$ (see \cite{Duren1970}).

\begin{lettertheorem}\label{th:cesaro}
Let $\Phi:\mathbb{R}\to\C$ be a $C^\infty$-function with compact
support $\supp(\Phi)$.  The following
assertions hold:
\begin{itemize}
\item[\rm(i)] There exists a constant $C>0$ such that
    $$
    \left|W_n^\Phi(e^{i\theta})\right|\le C\min\left\{
    n\max_{s\in\mathbb{R}}|\Phi(s)|,
    n^{1-m}|\theta|^{-m}\max_{s\in\mathbb{R}}|\Phi^{(m)}(s)|
    \right\},
    $$
for all $m\in\N\cup\{0\}$, $n\in\N$ and $0<|\theta|<\pi$.
\item[\rm(ii)] If $0<p\le 1$ and $m\in\N$ with $mp>1$, there exists a constant $C(p)>0$ such that
    $$
    \left(\sup_{n}\left|(W_n^\Phi\ast f)(e^{i\theta})\right|\right)^p\le CA_{\Phi,m}
    M(|f|^p)(e^{i\theta})
    $$
for all $f\in H^p$. Here $M$ denotes the Hardy-Littlewood
maximal-operator
    $$
    M(|f|)(e^{i\theta})=\sup_{0<h<\pi}\frac{1}{2h}\int_{\theta-h}^{\theta+h}|f(e^{it})|\,dt.
    $$
\item[\rm(iii)]
 Then, for each $p\in(0,\infty)$ and $m\in\N$ with $mp>1$, there exists a constant
$C=C(p)>0$ such that
    $$
    \|W_n^\Phi\ast f\|_{H^p}\le C A_{\Phi,m}\|f\|_{H^p}
    $$
for all $f\in H^p$ and $n\in\N$.
\end{itemize}
\end{lettertheorem}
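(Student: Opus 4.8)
The plan is to prove the three items in order, each building on the previous one; this is the route indicated in \cite[pp.~111--113]{Pabook}.

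\emph{Part (i).} I would argue directly. Since $\supp\Phi$ is compact, at most $Cn$ of the numbers $\Phi(k/n)$ are nonzero, so $|W_n^\Phi(e^{i\theta})|\le\sum_k|\Phi(k/n)|\le Cn\max_s|\Phi(s)|$, which is the first term in the minimum. For the second term I apply summation by parts $m$ times: starting from
\[
(1-e^{i\theta})\,W_n^\Phi(e^{i\theta})=\sum_{k\in\Z}\Bigl(\Phi\bigl(\tfrac{k}{n}\bigr)-\Phi\bigl(\tfrac{k-1}{n}\bigr)\Bigr)e^{ik\theta},
\]
one gets after $m$ steps $(1-e^{i\theta})^m W_n^\Phi(e^{i\theta})=\sum_k(\nabla^m_{1/n}\Phi)(k/n)\,e^{ik\theta}$, where $\nabla_{1/n}$ is the backward difference of step $1/n$. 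By the mean value theorem for finite differences $|(\nabla^m_{1/n}\Phi)(k/n)|\le n^{-m}\max_s|\Phi^{(m)}(s)|$, and again only $Cn$ terms survive, whence $|1-e^{i\theta}|^m|W_n^\Phi(e^{i\theta})|\le Cn^{1-m}\max_s|\Phi^{(m)}(s)|$. Since $|1-e^{i\theta}|\asymp|\theta|$ for $0<|\theta|\le\pi$, this is the second bound, and taking the minimum finishes part~(i).

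\emph{Part (ii).} The idea is to read part~(i) as the kernel estimate $|W_n^\Phi(e^{it})|\le CA_{\Phi,m}\,n(1+n|t|)^{-m}$ for $|t|\le\pi$, and then to exploit the subharmonicity of $|f|^p$ -- which is where analyticity of $f$ is used in an essential way, the inequality being false for general $f$. Writing $(W_n^\Phi\ast f)(e^{i\theta})=\frac1{2\pi}\int_{-\pi}^\pi W_n^\Phi(e^{it})f(e^{i(\theta-t)})\,dt$ and decomposing $(-\pi,\pi)$ into $\{|t|\le1/n\}$ and the dyadic rings $\{2^{j-1}/n\le|t|<2^j/n\}$, $j\ge1$, part~(i) bounds $|W_n^\Phi|$ by $CA_{\Phi,m}\,n2^{-jm}$ on the $j$-th ring; applying on each ring the sub-mean value inequality for $|f|^p$ on a Carleson-type box of scale $2^j/n$ over the arc centred at $e^{i\theta}$ dominates the corresponding piece by a multiple of $A_{\Phi,m}\,M(|f|^p)(e^{i\theta})^{1/p}$ times a summable weight, the summability being exactly what the restriction $mp>1$ provides. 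Since $p\le1$, the $p$-th power of the sum is at most the sum of the $p$-th powers, which yields part~(ii). To make the convolution manipulations legitimate for $f\in H^p$, whose boundary function need not be integrable, I would first carry them out for the dilate $f_\rho(z)=f(\rho z)$ with $\rho=1-1/n$ -- absorbing the harmless factor $\rho^{-k}$ on $\supp\Phi$ into a modification of $\Phi$ of the same order of magnitude, as in part~(i) -- and then pass to the limit $\rho\to1$, using that $|f_\rho|^p\to|f|^p$ in $L^1(\T)$.

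\emph{Part (iii).} For $p\ge1$ this follows from part~(i) and Young's inequality: part~(i), used with index $\ge2$ (legitimate since $\Phi\in C^\infty$), gives $\sup_n\|W_n^\Phi\|_{L^1(\T)}\le CA_{\Phi,m}$, hence $\|W_n^\Phi\ast f\|_{H^p}\le CA_{\Phi,m}\|f\|_{H^p}$. For $0<p<1$ one combines part~(ii) with the maximal characterisation of $H^p$: from part~(ii), $\sup_n|W_n^\Phi\ast f|\le CA_{\Phi,m}\,M(|f|^p)^{1/p}\le CA_{\Phi,m}\,f^{\#}$, where $f^{\#}$ denotes the non-tangential maximal function (the inequality $M(|f|^p)^{1/p}\lesssim f^{\#}$ for $f\in H^p$ being classical, e.g.\ via the factorisation $f=Bg$ with $B$ a Blaschke product and $g^{p/2}\in H^2$), and since $\|f^{\#}\|_{L^p(\T)}\lesssim\|f\|_{H^p}$ one concludes.

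The main obstacle is part~(ii): the delicate points are the bookkeeping in the dyadic/Carleson-box decomposition and the essential use of the subharmonicity of $|f|^p$, together with the dilation--limit argument needed to handle $H^p$ boundary values when $p<1$; parts~(i) and (iii) are then routine consequences. All the details can be found in \cite[pp.~111--113]{Pabook}.
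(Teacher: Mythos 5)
The paper contains no proof of Theorem A to compare with: it is quoted from \cite[pp.~111--113]{Pabook}, and your sketch follows the same standard route for part (i) (the trivial bound plus $m$-fold summation by parts and $|1-e^{i\theta}|\asymp|\theta|$), which is fine, as is everything you do for $p\ge 1$ --- in particular the case $p=1$, $m=2$, which is the only one this paper actually uses. The genuine problems are in the $p<1$ cases of (ii) and (iii). In (ii), the estimate you invoke on each dyadic ring --- a sub-mean value inequality for $|f|^p$ over a ``Carleson-type box of scale $2^j/n$'' --- is false at the points where you need it: after the dilation to $\rho=1-1/n$ the values $f(\rho e^{it})$ sit at depth $1/n\ll 2^j/n$, and $\sup_{Q}|f|^p$ over a box of side $\delta$ is not controlled by $\delta^{-1}\int_{|s-\theta|\le 2\delta}|f^*|^p\,ds$ (let $f^*$ have a spike of width $w\ll\delta$ inside the arc). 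The valid substitute at depth $1/n$, namely $|f(\rho e^{it})|^p\lesssim M(|f^*|^p)(e^{it})$, only yields the uniform bound $\sup_{|t-\theta|\le 2^j/n}|f(\rho e^{it})|^p\lesssim 2^{j}M(|f^*|^p)(e^{i\theta})$, and then your ring-by-ring accounting sums only when $mp>p+1$, not $mp>1$ (for $p=1$, $m=2$ it is exactly borderline, with a $\log n$ loss). The correct bookkeeping, which is what the cited source does, splits the $j$-th ring into $\sim 2^j$ arcs of length $1/n$, applies the harmonic-majorant/sub-mean value estimate at that natural scale, and uses the $p$-subadditivity $(\sum a_i)^p\le\sum a_i^p$ over those pieces; that is precisely where $mp>1$ enters.

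In (iii), the case $0<p<1$ rests on the pointwise inequality $M(|f|^p)^{1/p}\lesssim f^{\#}$, which is false; the classical inequality goes the other way, $f^{\#}\lesssim M(|f^*|^p)^{1/p}$, via $|f|^p\le P[|f^*|^p]$. (Placing $N$ separated unimodular spikes of width $w$ on an arc at distance $\sim h$ from $\theta$ keeps $M(|f^*|^p)(e^{i\theta})\gtrsim 1$ while $f^{\#}(e^{i\theta})\to 0$ as $w\to0$.) Note also that you cannot simply integrate (ii), since the Hardy--Littlewood operator is not bounded on $L^1$. The standard repair: since $mp>1$, choose $q$ with $1/m<q<p$, apply (ii) with exponent $q$ to get $\sup_n|W_n^\Phi\ast f|\le C\bigl(A_{\Phi,m}M(|f|^q)\bigr)^{1/q}$, and integrate using the boundedness of $M$ on $L^{p/q}$ (here $p/q>1$); the normalisation $\Phi\mapsto\Phi/A_{\Phi,m}$ restores the stated linear dependence on $A_{\Phi,m}$. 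A further small point: for $p>1$ your Young's-inequality argument produces the constant $A_{\Phi,2}$, so the endpoint $m=1$ (allowed by $mp>1$) is not covered and would require a Marcinkiewicz-type multiplier argument --- immaterial here, since the paper only applies the theorem with $p=1$, $m=2$.
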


Secondly, let us consider a useful particular case of the previous construction. We define  the sequence of polynomials
 $\{V_n\}_{n=0}^\infty$  \cite[Section $2$]{JevPac98} (see also \cite[p. 9]{Padec}) as follows.
Let $\Psi$ be a $C^\infty$-function on $\mathbb{R}$ such that
\begin{enumerate}
\item $\Psi(t)=1$ for $t\le 1$,
\item $\Psi(t)=0$ for $t\ge 2$,
\item $\Psi$ is decreasing and positive on the interval $(1,2)$.
\end{enumerate}
Set $\psi(t)=\Psi\left(\frac{t}{2}\right)-\Psi(t)$. Let $V_0(z)=1+z$ and for $n\ge 1$, let
\begin{equation}\label{vn}
V_n(z)=W^\psi_{2^{n-1}}(z)=\sum_{k=0}^\infty \psi\left(\frac{k}{2^{n-1}}\right)z^k=\sum_{k=2^{n-1}}^{2^{n+1}-1} \psi\left(\frac{k}{2^{n-1}}\right)z^k.
\end{equation}
These polynomials have the following properties with regard to smooth partial sums \cite{JevPac98} (see also \cite[p.$111$-$113$]{Pabook}):
\begin{equation}\begin{split}\label{propervn}
&f(z)=\sum_{n=0}^\infty (V_n\ast f)(z),\quad\text{for any $f\in H(\D)$},
\\ & ||V_n\ast f||_{H^p}\le C||f||_{H^p},\quad\text{for any $f\in H^p$ and $0<p<\infty$},
\\ & ||V_n||_{H^p}\asymp 2^{n(1-1/p)}, \quad\text{for any  $0<p<\infty$}.
\end{split}\end{equation}

\par With these preparations  we are ready to prove the estimate for the integral means of the kernel.

\begin{Pf}{\em{Proposition \ref{pr:intmean}.}}

First, we shall prove the upper estimate
\begin{equation}\label{upper}
M_1(r,K)\lesssim \frac{\exp\left(\frac{\a}{1-\rho}\right)}{1-\rho},\quad \rho=\sqrt{r}.
\end{equation}
\par {\bf{Step $\mathbf{1}$.}}
 It follows from \eqref{propervn} that
\begin{equation}\label{fbnorm}
M_1(r,K)=||f||_{H^1}\le \sum_{n=0}^\infty ||V_n\ast f||_{H^1},
\end{equation}
where $f(z)=\sum_{n=0}^\infty \frac{r^n}{v_{2n+1}}z^n.$
 Next each $n=3,4,\dots$, let us consider
$$F_n(x)=\frac{r^x}{v_{2x+1}},
\quad  x\in[2^{n-1},2^{n+1}].$$
Applying Lemma \ref{moments} for $n=0,1,2$ we have that
$$\frac{\int_0^1s^{2x+1} \log\frac{1}{s}v(s)\,ds}{v_{2x+1}}\asymp x^{-1/2},\quad\text{and}\quad \frac{\int_0^1s^{2x+1} \left(\log\frac{1}{s}\right)^2v(s)\,ds}{v_{2x+1}}\asymp x^{-1},\quad x\to\infty$$
 so there is a positive constant $C$ such that
\begin{equation*}\begin{split}
    A_{F_n,2}= &\max_{x\in[2^{n-1},2^{n+1}]}|F_n(x)|+\max_{x\in[2^{n-1},2^{n+1}]}|F_n''(x)|
    \\ & \le C \max_{x\in[2^{n-1},2^{n+1}]}F_n(x)=CM_n
    ,\quad n\in\N,
\end{split}\end{equation*}
 where    $M_n=\max_{x\in[2^{n-1},2^{n+1}]}F_n(x)$.
\medskip\par
For each $n=3,4,\dots$,  choose $\Phi_n$ a $C^\infty$-function with compact support contained in $[2^{n-2},2^{n+2}]$ such that
$\Phi_n(x)=F_n(x)=\frac{r^x}{v_{2x+1}}$ if     $x\in[2^{n-1},2^{n+1}]$ and
 \begin{equation}\label{phin}
    A_{\Phi_n,2}=\max_{x\in\mathbb{R}}|\Phi_n(x)|+\max_{x\in\mathbb{R}}|\Phi''_n(x)|\le C M_n,\quad n=3,4,\dots
    \end{equation}
Since
 $$W_1^{\Phi_n}(e^{i\theta})=\sum_{k\in\mathbb
    Z}\Phi_n\left(k\right)e^{ik\theta}= \sum_{k\in\mathbb
    Z\cap[2^{n-2},2^{n+2}]}\Phi_n\left(k\right)e^{ik\theta} $$
    and  by \eqref{vn} $supp\widehat{V_n}\subset [2^{n-1}, 2^{n+1}-1]\subsetneq [2^{n-2},2^{n+2}]$, then
       \begin{equation*}\begin{split}
     \left(V_n \ast f\right)(z) &=\sum_{k=2^{n-1}}^{2^{n+1}-1} \psi\left(\frac{k}{2^{n-1}}\right)\frac{r^k}{v_{2k+1}}z^k
  \\ &  =\sum_{k=2^{n-1}}^{2^{n+1}-1} \psi\left(\frac{k}{2^{n-1}}\right)\Phi_n(k)z^k
  \\ &=\left(W_1^{\Phi_n}\ast V_n\right)(z)
    \end{split}\end{equation*}
  This together with Theorem \ref{th:cesaro},  \eqref{phin} and \eqref{propervn}, implies that
    \begin{equation*}\begin{split}
    ||V_n \ast f||_{H^1}&=
    ||W_1^{\Phi_n}\ast V_n||_{H^1}
   \\ & \le C A_{\Phi_n,2}||V_n||_{H^1}
  \\ & \le CM_n ||V_n||_{H^1} \le CM_n \quad n=3,4\dots.
\end{split}\end{equation*}
So, bearing in mind \eqref{fbnorm}
\begin{equation}\label{endstep1}
M_1(r,K)\le C+\sum_{n=3}^\infty M_n.
\end{equation}
\medskip
\par {\bf{Step $\mathbf{2}$.}}
Recall from  Lemma \ref{moments}  that
\begin{equation}\label{comp}
\frac{r^x}{v_{2x+1}}=\frac{\rho^{2x}}{v_{2x+1}}\asymp   \rho^{2x} (2x)^{\frac{3}{4}}e^{2\sqrt{2\a x}}=e^{h(2x)},
\end{equation}
where  $h(x)=\frac{3}{4}\log(x)+2\sqrt{\a x}+x\log \rho$, $x\in (0,\infty)$.
A calculation shows that $h$ increases in $(0,x_\rho)$ and
decreases in $(x_\rho,\infty)$, where $x_\rho=\left(\frac{\sqrt{\a+3\log\frac{1}{\rho}}+\sqrt{\a}}{2\log\frac{1}{\rho}} \right)^2$. Moreover,
\begin{equation}\label{max}
\sup_{x\in (0,\infty)} e^{h(x)}=e^{h(x_\rho)}\asymp \left(\frac{1}{\log\frac{1}{\rho}}\right)^{3/2}\exp\left(\frac{\a}{\log\frac{1}{\rho}}\right)
\asymp \left(\frac{1}{1-\rho}\right)^{3/2}\exp\left(\frac{\a}{1-\rho}\right).
\end{equation}
\medskip \par {\bf{Step $\mathbf{3}$.}}
Choose $n_0\in \N$ such that $2^{n_0}\le x_\rho<2^{n_0+1}$ and split the sum in \eqref{endstep1} as follows
\begin{equation}\label{step3}
\sum_{n=3}^\infty M_n = \sum_{n=3}^{n_0-3} M_n+ \sum_{n=n_0-2}^{n_0+1} M_n +\sum_{n=n_0+2}^\infty M_n
\end{equation}
Bearing in mind \eqref{comp} and the monotonicity of $h$, we deduce  for $3\le n\le n_0-3$
\begin{equation}\label{step31}
M_n=\max_{x\in[2^{n-1}, 2^{n+1}]}\frac{\rho^{2x}}{v_{2x+1}}\asymp  \max_{x\in[2^{n-1}, 2^{n+1}]} e^{h(2x)}=e^{h(2^{n+2})}.
\end{equation}

It follows from \eqref{comp} and   \eqref{max} that
\begin{equation}\label{step32}
\sum_{n=n_0-2}^{n_0+1} M_n\lesssim \left(\frac{1}{1-\rho}\right)^{3/2}\exp\left(\frac{\a}{1-\rho}\right).
\end{equation}
Using again \eqref{comp} and the monotonicity of $h$, whenever $ n\ge n_0+2$
\begin{equation*}
M_n=\max_{x\in[2^{n-1},2^{n+1}]}\frac{\rho^{2x}}{v_{2x+1}}\asymp  \max_{x\in[2^{n-1}, 2^{n+1}]} e^{h(2x)}=e^{h(2^{n})},
\end{equation*}
which together with \eqref{step3}, \eqref{step31} and \eqref{step32} gives that
\begin{equation}\label{step33}
\sum_{n=3}^\infty M_n \lesssim \sum_{n=4}^{n_0-2} e^{h(2^{n+1})}+  \sum_{n=n_0+2}^\infty e^{h(2^{n})} + \left(\frac{1}{1-\rho}\right)^{3/2}\exp\left(\frac{\a}{1-\rho}\right).
\end{equation}
Next,  if $n\le n_0-2$,
$$ e^{h(2^{n+1})}\le \frac{1}{2^{n+1}}\sum_{k=2^{n}}^{2^{n+1}-1}e^{h(2k)}\le 4  \sum_{k=2^{n}}^{2^{n+1}-1}\frac{e^{h(2k)}}{k+1}$$
and for $n\ge n_0+2$
$$ e^{h(2^{n})}\le \frac{1}{2^{n-2}}\sum_{k=2^{n-2}}^{2^{n-1}-1}e^{h(2k)}\le 4  \sum_{k=2^{n-2}}^{2^{n-1}-1}\frac{e^{h(2k)}}{k+1}.$$
Joining the above two inequalities with \eqref{step33} and \eqref{endstep1}, we get
\begin{equation}\begin{split}\label{im2}
M_1(r,K) & \lesssim \left(\frac{1}{1-\rho}\right)^{3/2}\exp\left(\frac{\a}{1-\rho}\right)+\sum_{k=1}^\infty\frac{e^{h(2k)}}{k+1}
\\ & \asymp \left(\frac{1}{1-\rho}\right)^{3/2}\exp\left(\frac{\a}{1-\rho}\right)+\sum_{k=1}^\infty(2k)^{-\frac{1}{4}}\exp\left(2\sqrt{2\a k}\right) \rho^{2k}
\\ & \le \left(\frac{1}{1-\rho}\right)^{3/2}\exp\left(\frac{\a}{1-\rho}\right)+ \sum_{n=2}^\infty
n^{-\frac{1}{4}}\exp\left(2\sqrt{\a n}\right) \rho^{n}.
\end{split}\end{equation}
 Next, let us consider $N(x)=2\sqrt{\a x}-\frac{1}{4}\log(x)$. A new calculation shows that its inverse Legendre-Fenchel transform (see \cite{Kriete2003}) is
 $$u(t)=\frac{2\a+t+2\sqrt{\a(\a-t)}}{4t}-\frac{1}{2}\log\left(\frac{\sqrt{\a}+\sqrt{\a-t}}{2t}\right),$$
 so it follows that
 $$e^{u(t)}\asymp \sqrt{t}e^{\frac{\a}{t}},\quad t\to 0^+,$$
 and
 $$u''(t)\asymp \frac{1}{t^3},\quad t\to 0^+.$$
 So, by \cite[Corollary $1$]{Kriete2003}
 $$\sum_{n=2}^\infty
n^{-\frac{1}{4}}\exp\left(2\sqrt{\a n}\right) \rho^{n}\asymp \frac{\exp\left(\frac{\a}{1-\rho}\right)}{1-\rho},$$
which together with \eqref{im2} finishes the proof of the upper estimate for $M_1(r,K)$.
\par On the other hand, choose $N_0\in \N$ such that $N_0\le \frac{x_\rho}{2}<N_0+1$. Then, by Lemma \ref{moments}
$$ \frac{r^N_0}{v_{2N0+1}}\asymp   \rho^{2N0} (2N0)^{\frac{3}{4}}e^{2\sqrt{2\a N0}}=e^{h(2N0)}\asymp e^{h(x_\rho)},$$
which together with the well known estimate on coefficients of $H^1$-functions \cite[Theorem $6.4$]{Duren1970}, implies that
$$M_1(r,K)\ge \frac{r^N_0}{v_{2N0+1}}\asymp \left(\frac{1}{1-\rho}\right)^{3/2}\exp\left(\frac{\a}{1-\rho}\right).$$
This concludes the proof.
\end{Pf}

\subsection{Dual spaces}
As as immediate consequence of Theorem \ref{th:5} we have

\begin{corollary}\label{dual} Assume $v(r)=\exp{\left(-\frac{\a}{1-r}\right)}$,\,\,$\a>0$. For $p>1$, the dual of  $A^p_{v^{p/2}}$ is $A^q_{v^{q/2}}$,
under the pairing
\begin{equation}\label{pairing}
\langle f,g \rangle =\int_\D f \bar g \, vdm,
\end{equation}
where $1/p+1/q=1$. Moreover, the dual of $A^1_{v^{1/2}}$ is $A^\infty_{v^{1/2}}$ under the pairing (\ref{pairing}).
\end{corollary}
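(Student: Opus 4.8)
The plan is to carry out the classical duality argument, now powered by the boundedness of $P_v$ from Theorem~\ref{th:5}. Write $\Lambda_g(f)=\langle f,g\rangle=\int_\D f\bar g\,v\,dm$ for the functional attached to $g$. The easy direction is that for $g\in A^q_{v^{q/2}}$ (resp.\ $g\in A^\infty_{v^{1/2}}$) the functional $\Lambda_g$ is bounded on $A^p_{v^{p/2}}$: H\"older's inequality gives
$$|\Lambda_g(f)|=\left|\int_\D (fv^{1/2})\,\overline{g v^{1/2}}\,dm\right|\le\|f\|_{A^p_{v^{p/2}}}\|g\|_{A^q_{v^{q/2}}},$$
so $\|\Lambda_g\|\le\|g\|_{A^q_{v^{q/2}}}$; moreover $g\mapsto\Lambda_g$ is injective, since the monomials $z^n$ lie in $A^p_{v^{p/2}}$ (the weight $v$ being bounded) and $\Lambda_g(z^n)$ is a nonzero multiple of the $n$-th Taylor coefficient of $g$.

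It remains to show every $L\in(A^p_{v^{p/2}})^\star$ equals some $\Lambda_g$. First I would note that polynomials are dense in $A^p_{v^{p/2}}$, $1\le p<\infty$: for $f$ in this space the dilations $f_\rho(z)=f(\rho z)$ converge to $f$ in norm as $\rho\to1^-$ by dominated convergence in $\int_0^1\big(\int_0^{2\pi}|f(re^{it})|^p\,dt\big)v(r)^{p/2}r\,dr$ (using monotonicity in $r$ of the inner integral), and each $f_\rho$ is a uniform, hence $A^p_{v^{p/2}}$, limit of its Taylor polynomials. Given $L$, extend it by Hahn--Banach to $\widetilde L$ on $L^p(\D,v^{p/2})$ with the same norm; identifying $L^p(\D,v^{p/2})$ isometrically with $L^p(\D,dm)$ via $f\mapsto fv^{1/2}$ we may write $\widetilde L(f)=\int_\D fv^{1/2}\bar k\,dm$ with $k\in L^q(\D,dm)$ (with $k\in L^\infty$ when $p=1$), i.e.\ $\widetilde L(f)=\langle f,g_0\rangle$ where $g_0=v^{-1/2}k\in L^q(\D,v^{q/2})$ (resp.\ $g_0\in L^\infty_{v^{1/2}}$) and $\|g_0\|=\|k\|_{L^q(dm)}=\|L\|$.

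Now put $g=P_vg_0$. By Theorem~\ref{th:5}, $g\in A^q_{v^{q/2}}$ (resp.\ $A^\infty_{v^{1/2}}$) with $\|g\|\lesssim\|g_0\|\asymp\|L\|$, so by the first step it suffices to check $L=\Lambda_g$, and by density it is enough to do so on the monomials. Expanding $P_v$ as the integral operator with kernel $K(z,\zeta)=\sum_k(z\bar\zeta)^k/v_{2k+1}$, Fubini's theorem (legitimate by the absolute integrability furnished by \eqref{bp10b}, exactly as in the proof of Lemma~\ref{le:est}) combined with the elementary identity $\int_\D K(\zeta,w)\,\bar\zeta^{\,n}\,v(\zeta)\,dm(\zeta)=\bar w^{\,n}$ gives $\int_\D(P_vg_0)(\zeta)\,\bar\zeta^{\,n}\,v(\zeta)\,dm(\zeta)=\int_\D g_0(\zeta)\,\bar\zeta^{\,n}\,v(\zeta)\,dm(\zeta)$; conjugating, $\Lambda_g(z^n)=\langle z^n,g\rangle=\langle z^n,g_0\rangle=\widetilde L(z^n)=L(z^n)$. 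Hence $L=\Lambda_g$ on $A^p_{v^{p/2}}$, and since $\|L\|\le\|g\|\lesssim\|L\|$ the pairing \eqref{pairing} identifies $A^q_{v^{q/2}}$ (resp.\ $A^\infty_{v^{1/2}}$) with the dual of $A^p_{v^{p/2}}$. I expect the two delicate points to be the density of polynomials for small $p$, and the self-adjointness-type identity $\langle z^n,P_vg_0\rangle=\langle z^n,g_0\rangle$: the latter cannot be deduced from Hilbert-space self-adjointness of $P_v$ since $g_0$ need not lie in $L^2(\D,v)$, and must instead be read off directly from the series expansion of $K$.
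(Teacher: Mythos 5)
Your proposal is correct and takes essentially the same route as the paper: Hahn--Banach extension of $L$, representation of the extended functional by some $h\in L^q(\D,v^{q/2})$ (resp.\ $L^\infty_{v^{1/2}}$), and then $g=P_v h\in A^q_{v^{q/2}}$ (resp.\ $A^\infty_{v^{1/2}}$) by Theorem~\ref{th:5}, with norm control in both directions. The only (harmless) difference is in verifying $L=\langle\cdot,g\rangle$: the paper uses the reproducing identity $P_vf=f$ on $A^p_{v^{p/2}}$ together with Fubini's theorem justified by Remark~\ref{sublinear}, whereas you check the identity only on monomials (via the reproducing property for $z^n$ and Fubini based on \eqref{bp10b}) and conclude by the density of polynomials, which you establish by dilation -- a slightly more self-contained verification of the same argument.
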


\begin{proof}
The fact that any function in $A^q_{v^{q/2}}$ induces a bounded linear functional on $A^p_{v^{p/2}}$ with the pairing (\ref{pairing})
follows by a simple application of H\"older's inequality  in the case $p>1$, while the case $p=1$ is obvious.

Conversely, assume that $L$ is a bounded linear functional on $A^p_{v^{p/2}}$. By the Hahn-Banach theorem we can extend $L$
to a functional $\tilde L$ on $L^p(\D, v^{p/2})$ with $\|L\|=\|\tilde L\|$.  Hence there exists a function $h\in L^q(v^{q/2})$ for $p>1$, respectively $h\in L^\infty_{v^{1/2}}$
for $p=1$, such that
$$
L f=\int_\D  f \bar h \, vdm, \quad f\in A^p_{v^{p/2}},
$$
and $\|\tilde L\|=\|h\|_{L^q(v^{q/2})}$.
Then, in view of Remark \ref{sublinear}, we can use Fubini's theorem to deduce
\begin{eqnarray*}
L f&=& \int_\D  f \bar h \, vdm=\int_\D  P_v f \, \bar h \, vdm \\
&=& \int_\D \left(\int_\D f(\z) K(z,\z) v(\z)\, dm(\z) \right) \bar h(z) \, v(z)dm(z)\\
&=& \int_\D  f(\z)   \overline{P_v h (\z)} \, v(\z)dm(\z)\,.
\end{eqnarray*}
By Theorem \ref{th:5} we obtain $P_v h \in A^q_{v^{q/2}}$  and
$$
\|P_v h\|_{A^q_{v^{q/2}}}\lesssim \|h\|_{L^q(v^{q/2})}=\|L\|,\quad \text{if $p>1$.}
$$
Moreover,  for $p=1$  we have $P_v h\in A^\infty_{v^{1/2}}$ and $$
\|P_v h\|_{A^\infty_{v^{1/2}}}\lesssim \|h\|_{L^\infty(v^{1/2})}=\|L\|,
$$
and the proof is complete.
\end{proof}

\subsection{$L^p$-unboundedness}

\par  This section is mainly devoted to prove  Proposition \ref{th:1}.  For this aim, we shall use \cite[Theorem $1.2$]{ZeyTams2012}.
\begin{lettertheorem}\label{th:zey}
If $w$ is a radial weight on $\D$ which satisfies
\par $(i)$\, $w$ is  $C^\infty([0,1))$,
\par $(ii)$\,$\lim_{r\to 1^-} w^{(n)}(r)=0$ for any $n\in\N$,
\par $(iii)$\, for any $n\in\N$ there exists $a_n\in (0,1)$ such that $(-1)^nw^{(n)}$ is non-negative on the interval
$(a_n,1)$.
\par Then, if $0<p<\infty$,
the Bergman projection  is bounded from  $L^p(\D,w )$ to $L^p(\D,w )$ only for $p=2$.
\end{lettertheorem}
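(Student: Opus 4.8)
The plan is to reduce the failure of $L^p$-boundedness (for $p\neq 2$) to a purely one-variable statement about the moments of $w$, using radiality in an essential way. The case $p=2$ is trivial: $P_w$ is the orthogonal projection of $L^2(\D,w)$ onto $A^2_w$, hence bounded (of norm one), so the entire content is the \emph{unboundedness} for every $p\neq 2$. To exhibit this I will not need a Schur-type upper bound at all, only a family of explicit test functions whose projection I can compute exactly.

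\emph{Diagonalization and reduction to a moment ratio.} Since $w$ is radial, the monomials are orthogonal in $A^2_w$ with $\|z^n\|_{A^2_w}^2=\gamma_n$, where I set $\gamma_t:=2\int_0^1 r^{2t+1}w(r)\,dr$ for all real $t\ge0$, and $K(z,\zeta)=\sum_{n}(z\bar\zeta)^n/\gamma_n$. I therefore test $P_w$ on the family $f_{N,s}(\zeta)=|\zeta|^{2s}\zeta^N$, $N\in\N$, which diagonalizes the problem: the angular integral kills all frequencies but $N$, giving $P_w f_{N,s}(z)=\frac{\gamma_{N+s}}{\gamma_N}z^N$. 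Computing the two $L^p(\D,w)$-norms in polar coordinates collapses everything to the moments:
\[
\frac{\|P_w f_{N,s}\|_{L^p(\D,w)}^{p}}{\|f_{N,s}\|_{L^p(\D,w)}^{p}}
=\Big(\frac{\gamma_{N+s}}{\gamma_N}\Big)^{p}\frac{\gamma_{pN/2}}{\gamma_{ps+pN/2}}=:R_p(N,s).
\]
If $P_w$ were bounded on $L^p(\D,w)$, then $\sup_{N,s}R_p(N,s)<\infty$; it suffices to contradict this (the argument is insensitive to $p<1$, where $L^p$ is only quasi-Banach, since $R_p$ is still a ratio of $p$-th powers of norms).

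\emph{The dichotomy at $p=2$.} Writing $\phi(t):=-\log\gamma_t$, Cauchy--Schwarz applied to $\gamma_t=2\int_0^1 e^{-2t\log(1/r)}r\,w(r)\,dr$ shows $t\mapsto\gamma_t$ is log-convex, i.e. $\phi$ is convex (and increasing, as $w$ decays). At $p=2$ this forces $R_2(N,s)=\gamma_{N+s}^2/(\gamma_N\gamma_{N+2s})\le1$, exactly recovering boundedness. For $p\neq2$ the two scales $N$ and $pN/2$ differ, and to first order $\log R_p(N,s)\approx ps\,[\phi'(pN/2)-\phi'(N)]$. The plan is to pick the sign of $s$ opposite to that of $p-2$ (equivalently, since $P_w$ is self-adjoint for the $L^2(\D,w)$ pairing one may reduce to $p>2$ and take $s>0$) so that the convexity gap $\phi'(pN/2)-\phi'(N)$ has a definite positive sign, and then let $N\to\infty$. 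The hypotheses are precisely what make this gap blow up: (ii) forces $w$ to decay faster than any power of $1-r$, so $\phi'(t)\to\infty$, and then for $p\neq2$ the gap $\phi'(pN/2)-\phi'(N)$ diverges (contrast with a standard weight, where $\phi(t)\asymp c\log t$, $\phi'(t)\to0$, and no blow-up occurs — consistent with boundedness for all $1<p<\infty$).

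The main obstacle is the last step: extracting \emph{quantitative} asymptotics for $\gamma_t$ and for $\phi'$ from the purely qualitative hypotheses (i)--(iii), with no closed form for $w$. Here condition (iii) (the alternating-sign, complete-monotonicity-type behaviour $(-1)^n w^{(n)}\ge0$ near $1$) plays the decisive role: it guarantees that a Laplace/saddle-point analysis of $\gamma_t$ at the maximizer of $r^{2t+1}w(r)$ applies with controlled error terms, yielding a genuinely convex $\phi$ whose derivative is unbounded and whose higher-order behaviour does not spoil the Gaussian approximation. This is the analogue, in the present abstract setting, of Lemma~\ref{moments} and of the Kriete-type Legendre--Fenchel estimates used elsewhere in the paper. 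Once $\phi'$ is shown to grow and the convexity gap is shown to diverge for every $p\neq2$, choosing $s$ of the appropriate sign and sending $N\to\infty$ makes $R_p(N,s)$ unbounded, which completes the proof.
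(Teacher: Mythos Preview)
The paper does not prove this statement at all: Theorem~\ref{th:zey} is quoted verbatim from \cite[Theorem~1.2]{ZeyTams2012} and used as a black box to derive Proposition~\ref{th:1}. (The Fock-space analogue, Theorem~\ref{th:zeyFock}, is likewise stated without proof, with the remark that one mimics Zeytuncu's argument replacing $\int_0^1$ by $\int_0^\infty$.) So there is no ``paper's own proof'' to compare against; any assessment has to be of your sketch on its own merits.

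Your reduction is the correct one and is, in outline, Zeytuncu's: test on $f_{N,s}(\zeta)=|\zeta|^{2s}\zeta^N$, use radiality to get $P_w f_{N,s}=(\gamma_{N+s}/\gamma_N)z^N$, and reduce boundedness to a moment-ratio inequality. The observation that $\phi(t)=-\log\gamma_t$ is convex and that $R_2\le 1$ by Cauchy--Schwarz is also right. But the proof, as written, stops exactly at the point that carries all the weight. You correctly identify the obstacle --- showing that the convexity gap $\phi'(pN/2)-\phi'(N)$ diverges --- and then do not overcome it. Two concrete issues:
\begin{itemize}
\item The inference ``(ii) forces $w$ to decay faster than any power of $1-r$, so $\phi'(t)\to\infty$'' is not justified, and even granting $\phi'(t)\to\infty$ this does \emph{not} by itself give $\phi'(ct)-\phi'(t)\to\infty$ for fixed $c>1$ (think of $\phi'(t)=\log t$). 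One needs more, and this is precisely where hypothesis~(iii) enters in Zeytuncu's argument: the eventual alternation of signs of $w^{(n)}$ allows repeated integration by parts in $\gamma_t=2\int_0^1 r^{2t+1}w(r)\,dr$, producing sharp two-sided estimates for ratios like $\gamma_{t+a}/\gamma_t$ that you cannot get from convexity alone.
\item Invoking ``the analogue of Lemma~\ref{moments} and the Kriete-type estimates'' is not a substitute for an argument: those results in the paper are for the single explicit weight $v(r)=e^{-\alpha/(1-r)}$, whereas Theorem~\ref{th:zey} is stated for an abstract class. The passage from (i)--(iii) to usable moment asymptotics is the theorem, not a lemma one can borrow.
\end{itemize}
In short: the strategy is right, but what you have is a plan, not a proof; the missing step is exactly the content of \cite{ZeyTams2012}.
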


\begin{definition}
Given $\phi\in C^\infty([0,1))$, we shall say that $Q_n(\phi)$ is a product of level $n$ (for $\phi$) if
 $Q_n(\phi)=\Pi_{j=1}^n\left(\phi^{(j)}\right)^{m(j)}$ where $m(j)\in \N\cup\{0\}$ and
 $\Sigma_{j^=1}^n\, jm(j)=n.$
\end{definition}
\par We  shall also use the next lemma which proof will be omitted.
\begin{lemma}\label{le:bp1}
Assume that $\phi\in C^\infty([0,1))$. Then
\begin{enumerate}
\item  If  $Q_n(\phi)=\Pi_{j=1}^n\left(\phi^{(j)}\right)^{m(j)}$ is is a product of level $n$, then $\phi'Q_n(\phi)=(\phi')^{m(1)+1}\Pi_{j=2}^n\left(\phi^{(j)}\right)^{m(j)}$ is a product of  level $(n+1)$.
\item The derivative of a product of level $n$ is a finite linear combination of products of level $(n+1)$.
\end{enumerate}
\end{lemma}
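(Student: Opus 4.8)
The plan is to prove both parts by a direct computation, viewing a product of level $n$ simply as a bookkeeping device for the multi-index $(m(1),\dots,m(n))\in(\N\cup\{0\})^n$ subject to $\sum_{j=1}^{n} j\,m(j)=n$. Part (1) is immediate: from $Q_n(\phi)=(\phi')^{m(1)}\prod_{j=2}^{n}(\phi^{(j)})^{m(j)}$, multiplication by $\phi'$ only raises the exponent of $\phi'$ by one, so putting $\widetilde m(1)=m(1)+1$, $\widetilde m(j)=m(j)$ for $2\le j\le n$, and $\widetilde m(n+1)=0$, we get $\phi'Q_n(\phi)=\prod_{j=1}^{n+1}(\phi^{(j)})^{\widetilde m(j)}$ with $\sum_{j=1}^{n+1} j\,\widetilde m(j)=\big(\sum_{j=1}^{n} j\,m(j)\big)+1=n+1$; hence $\phi'Q_n(\phi)$ is a product of level $n+1$.

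For Part (2), I would apply the product rule to $Q_n(\phi)=\prod_{j=1}^{n}(\phi^{(j)})^{m(j)}$, regarded as a product of the $n$ factors $(\phi^{(j)})^{m(j)}$, obtaining the (everywhere valid, division-free) identity
\begin{equation*}
\big(Q_n(\phi)\big)'=\sum_{\substack{1\le i\le n\\ m(i)\ge 1}} m(i)\,\big(\phi^{(i)}\big)^{m(i)-1}\,\phi^{(i+1)}\prod_{\substack{1\le j\le n\\ j\ne i}}\big(\phi^{(j)}\big)^{m(j)}.
\end{equation*}
Each summand with $m(i)\ge 1$ equals $m(i)$ times the monomial obtained from $Q_n(\phi)$ by lowering the exponent of $\phi^{(i)}$ by one and raising that of $\phi^{(i+1)}$ by one; i.e.\ its exponent multi-index is $\widetilde m(i)=m(i)-1$, $\widetilde m(i+1)=m(i+1)+1$ (with the convention $m(n+1)=0$), and $\widetilde m(j)=m(j)$ otherwise. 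Therefore $\sum_{j=1}^{n+1} j\,\widetilde m(j)=\sum_{j=1}^{n} j\,m(j)-i+(i+1)=n+1$, so each summand is a positive integer multiple of a product of level $n+1$, whence $(Q_n(\phi))'$ is a finite linear combination of products of level $n+1$, as claimed.

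I do not expect any genuine obstacle here: the whole content is the index bookkeeping observation that one differentiation transfers a single unit of exponent from $\phi^{(i)}$ to $\phi^{(i+1)}$ and so raises the weight $\sum_j j\,m(j)$ by exactly one. The only points worth a word are the boundary case $i=n$, in which $\phi^{(n+1)}$ appears — this is legitimate since products of level $n+1$ are permitted to involve derivatives up to order $n+1$ — and the trivial fact that the coefficients $m(i)$ are nonnegative integers. This structural statement is exactly what is used in the proof of Proposition \ref{th:1}: writing $\big(e^{-2\phi}\big)^{(n)}=R_n(\phi)\,e^{-2\phi}$, Parts (1) and (2) together with induction show that $R_n(\phi)$ is a finite linear combination of products of level $n$, and then hypothesis \eqref{bp4} forces $R_n(\phi)\sim(-2\phi')^n$ as $r\to 1^-$, which (together with the growth hypotheses on $\phi$) delivers the sign and decay conditions needed to invoke Theorem \ref{th:zey}.
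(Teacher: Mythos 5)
Your proof is correct: the paper in fact states this lemma with its proof omitted as routine, and your index-bookkeeping argument (multiplication by $\phi'$ raises $m(1)$, hence $\sum_j j\,m(j)$, by one; one differentiation transfers a unit of exponent from $\phi^{(i)}$ to $\phi^{(i+1)}$ and so raises the level by exactly one, with each summand a nonnegative integer multiple of a level-$(n+1)$ product) is precisely the intended verification, including the correct handling of the boundary term $i=n$. Only a minor notational remark on your closing paragraph: the paper writes $w^{(n)}=P_n(\phi)e^{-2\phi}$ with $P_n(\phi)=(-1)^n2^n(\phi')^n+R_n(\phi)$, so what you call $R_n$ is the paper's $P_n$; this has no bearing on the correctness of your proof of the lemma.
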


\begin{Pf}{\em{of Proposition \ref{th:1}}.}
\par First we observe that
 \begin{equation}\label{bp3}
 \lim_{r\to 1^-} \left(\phi'(r)\right)^n e^{-2\phi(r)}=0,\quad\text{ for any $n\in\N$}.\end{equation}
This holds if and only if
$$\lim_{r\to 1^-}2\phi(r)-n\log\phi'(r)=+\infty.$$
So it suffices to prove
$$\lim_{r\to 1^-}\frac{\phi(r)}{\log\phi'(r)}=+\infty,$$
but this follows from L'Hospital's rule and \eqref{bp4} for $n=2$.

\par Next,
we are going to see that $w=e^{-2\phi}$ satisfies the hypotheses of Theorem \ref{th:zey}. Since $\phi\in C^\infty([0,1))$,  Theorem \ref{th:zey} (i) is clear.
Observe that $w'=-2\phi'   e^{-2\phi}$ and  $w''=[4(\phi')^2-2\phi'']   e^{-2\phi}$. Indeed reasoning by induction, it follows from Lemma \ref{le:bp1} that
\begin{equation}\label{bp5}
w^{(n)}=P_n(\phi)e^{-2\phi},\quad\text{where $P_n(\phi)=(-1)^n2^n\left(\phi'\right)^n+ R_n(\phi)$,}
\end{equation}
where the harmless term $R_n(\phi)$ is a finite linear combination of products of level $n$ such that the exponent of $\phi'$ is at most $(n-1)$.
So, by \eqref{bp4} and  (\ref{bp5})
\begin{equation}\label{bp6}
 \lim_{r\to 1^-} \frac{P_n(\phi)(r)}{\left(\phi'(r)\right)^n}=(-1)^n2^n,
 \end{equation}
 which together  (\ref{bp3}) gives
 $$\lim_{r\to 1^-} w^{(n)}(r)=\lim_{r\to 1^-}\frac{P_n(\phi)(r)}{\left(\phi'(r)\right)^n}\left(\phi'(r)\right)^ne^{-2\phi(r)}=0,$$
 that is, Theorem \ref{th:zey} (ii) holds.
 \par On the other hand, by \eqref{bp5},
 $$(-1)^nw^{(n)}=\left[2^n\left(\phi'\right)^n+ (-1)^nR_n(\phi)\right] e^{-2\phi}=\left[2^n+ \frac{(-1)^nR_n(\phi)}{\left(\phi'\right)^n}\right]\left(\phi'\right)^n e^{-2\phi}.$$
Since $\phi'$ is positive on $[0,1)$ and $\lim_{r\to 1^-} \frac{(-1)^nR_n(\phi)}{\left(\phi'(r)\right)^n}=0$, we deduce that there is
 $a_n(\phi)\in (0,1)$ such that $(-1)^nw^{(n)}(r)>0$  on the interval
$(a_n,1)$. This fact together with Theorem \ref{th:zey} finishes the proof.
\end{Pf}

\section{Weighted Fock spaces}
We begin by introducing a couple of definitions.
\begin{definition}
We shall say that  $\phi:\C\to \R^+$ belongs to the class $\mathcal{D}$ if it is a subharmonic function (not necessarily radial) having the property that
$\mu=\Delta\phi$ is a doubling measure.
\end{definition}
\begin{definition}
We shall say that  $\phi:\C\to \R^+$ belongs to the class $\mathcal{S}$ if it is radial and the function $\Psi(x)=2\phi(\sqrt{x})$  satisfies that
$$\Psi'(x)>0,\quad\Psi''(x)\ge 0\quad \Psi'''(x)\ge 0$$
and there exists a real number $\eta<\frac{1}{2}$ such that
$$2\Psi''(x)+x\Psi'''(x)=\og\left(\frac{\left(\Psi'(x)+x\Psi''(x)\right)^{1+\eta}}{\sqrt{x}}\right),\quad x\to\infty.$$
\end{definition}
\par It is worth to notice that $\mathcal{D}\cap\mathcal{S}$ contains functions,  as shown by the examples $\phi(z)=|z|^m$ with $2<m<4$.
However, roughly speaking, if  $\phi$ is radial, smooth enough an increases very rapidly, then $\phi\in \mathcal{S}\setminus\mathcal{D}$.
\begin{theorem}\label{th:3}
Asumme that $1\le p<\infty$ and $\phi:\C\to \R^+$ belongs to $\mathcal{D}\cup\mathcal{S}$. Then, the Bergman projection
$$P_\phi(f)(z)=\int_{\C} f(\z)\,K(z,\zeta)\,e^{-2\phi(\z)}\,dm(\z)$$
is bounded from $L^p(\C, e^{-p\phi})$ to $\Fpf$. Moreover, $P_\phi: L^\infty_\phi\to \Fif$ is bounded.
\end{theorem}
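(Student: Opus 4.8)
The plan is to reduce Theorem~\ref{th:3} to a single uniform kernel--integrability condition, exactly as Lemma~\ref{le:est} does in the disc, and then to verify that condition separately when $\phi\in\mathcal{D}$ and when $\phi\in\mathcal{S}$ by feeding in the known pointwise estimates for the weighted Fock kernel. First I would record the Fock analogue of Lemma~\ref{le:est}: if $w=e^{-2\phi}$ is a weight on $\C$ whose reproducing kernel $K$ satisfies
\begin{equation*}
M:=\sup_{z\in\C}\int_{\C}|K(z,\zeta)|\,e^{-\phi(z)-\phi(\zeta)}\,dm(\zeta)<\infty,
\end{equation*}
then $P_\phi$ is bounded from $L^p(\C,e^{-p\phi})$ to $\Fpf$ for every $1\le p<\infty$, and from $L^\infty_\phi$ to $\Fif$. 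Its proof is word for word that of Lemma~\ref{le:est} --- H\"older's inequality in the variable $\zeta$, Fubini's theorem, and the trivial endpoint cases $p=1$ and $p=\infty$ --- with $\D$ replaced by $\C$ and \eqref{bp7} used in place of \eqref{bp1}, so there is nothing new to write down. Hence it suffices to establish $M<\infty$ in the two cases.

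For $\phi\in\mathcal{D}$ the measure $\mu=\Delta\phi$ is doubling, and I would invoke the pointwise Bergman-kernel estimate of Marzo and Ortega-Cerd\`a \cite{marzo-ortega}: with $\rho$ the regularizing function associated with $\mu$ and $d_\rho$ the Riemannian distance it induces, there is $\ep>0$ with
\begin{equation*}
|K(z,\zeta)|\,e^{-\phi(z)-\phi(\zeta)}\lesssim \frac{1}{\rho(z)\,\rho(\zeta)}\,e^{-\ep\,d_\rho(z,\zeta)},\qquad z,\zeta\in\C.
\end{equation*}
Then a standard covering argument finishes the case: fix a $\rho$-lattice $\{z_j\}$ with $\C=\bigcup_j D(z_j,\rho(z_j))$, $|D(z_j,\rho(z_j))|\asymp\rho(z_j)^2$, and with $\rho(\cdot)\asymp\rho(z_j)$, $d_\rho(z,\cdot)\asymp d_\rho(z,z_j)$ on the $j$-th cell. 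Summing over $j$, one factor $\rho(z_j)^2$ cancels $1/\rho(z_j)$, and, using that the doubling hypothesis forces $\rho$ to vary slowly along $d_\rho$-geodesics (say $\rho(z_j)\lesssim\rho(z)\,(1+d_\rho(z,z_j))^{C}$) together with the polynomial growth of the number of lattice points $z_j$ with $d_\rho(z,z_j)\asymp k$, one obtains $\sum_j\rho(z_j)e^{-\ep\,d_\rho(z,z_j)}\lesssim\rho(z)$, whence $M<\infty$ uniformly in $z$.

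For $\phi\in\mathcal{S}$ the weight is radial, and I would instead use the kernel estimates of Seip and Youssfi \cite{SeiYouJGA2011}, tailored precisely to this class: with $\tau(z)\asymp 1/\sqrt{\Delta\phi(|z|)}$ the associated scale and $d$ the distance of the metric $\tau^{-2}|dz|^2$, these again give $|K(z,\zeta)|\,e^{-\phi(z)-\phi(\zeta)}\lesssim(\tau(z)\tau(\zeta))^{-1}e^{-\ep\,d(z,\zeta)}$; alternatively, one may reprove such a bound directly, in the spirit of Proposition~\ref{pr:intmean}, by estimating $M_1(r,K)$ from the asymptotics of the moments $\int_0^\infty r^{\lambda}e^{-2\phi(r)}r\,dr$ supplied by the Legendre--Fenchel/Kriete-type machinery under the third-derivative growth condition built into $\mathcal{S}$. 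The same covering estimate as above, now with $\tau$ in place of $\rho$ (the slow variation of $\tau$ coming from the defining inequality of $\mathcal{S}$), yields $M<\infty$, and together with Step~1 this proves the theorem. The technical heart throughout is precisely this covering bound $\sup_z\int_\C(\rho(z)\rho(\zeta))^{-1}e^{-\ep\,d_\rho(z,\zeta)}\,dm(\zeta)<\infty$: one must be sure that the kernel's exponential off-diagonal decay genuinely dominates both the growth of $|D(z_j,\rho(z_j))|$ and the possible growth of $\rho$ (resp.\ $\tau$) as $|z|\to\infty$, which is exactly where the structural assumptions defining $\mathcal{D}$ and $\mathcal{S}$ are used; everything else is routine.
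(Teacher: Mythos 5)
Your overall skeleton coincides with the paper's: the Fock analogue of Lemma \ref{le:est} is exactly the paper's starting point (it reduces Theorem \ref{th:3} to the single condition $\sup_{z\in\C}\int_{\C}|K(z,\zeta)|e^{-\phi(z)-\phi(\zeta)}\,dm(\zeta)<\infty$), and the two cases are handled with the kernel estimates of \cite{marzo-ortega} and \cite{SeiYouJGA2011}, as you propose. In the case $\phi\in\mathcal{D}$ your argument is essentially viable, but note that the Marzo--Ortega-Cerd\`a bound has only sub-exponential off-diagonal decay, $|K(z,\zeta)|e^{-\phi(z)-\phi(\zeta)}\lesssim(\rho(z)\rho(\zeta))^{-1}\exp(-d_\phi(z,\zeta)^{\ep})$, not $\exp(-\ep\,d_\phi(z,\zeta))$ as you wrote; your lattice count survives this correction (polynomial shell counts against $e^{-k^{\ep}}$ still sum), but the slow-variation facts for $\rho$ you invoke must be quoted from \cite{marzo-ortega} and \cite{MarMasOrtGFA2003}. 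The paper avoids the lattice altogether: it splits into $|z-\zeta|\ge\max\{\rho(z),\rho(\zeta)\}$ and its complement, controls the near part by the pointwise bound $(\rho(z)\rho(\zeta))^{-1}$, and for the far part converts the factor $\rho(\zeta)/\rho(z)$ into a power of $d_\phi(z,\zeta)$, absorbs it into the exponential, and finishes with the uniform bound $\int_\C\exp(-d_\phi^{\ep'}(z,\zeta))\,d\mu(\zeta)\le C$ from the same circle of results; the two routes are of comparable difficulty.

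The genuine gap is in the case $\phi\in\mathcal{S}$. There you assert a pointwise estimate $|K(z,\zeta)|e^{-\phi(z)-\phi(\zeta)}\lesssim(\tau(z)\tau(\zeta))^{-1}e^{-\ep\,d(z,\zeta)}$ "again" from \cite{SeiYouJGA2011}, but such an off-diagonal exponential bound is not what is cited or used anywhere in the paper, and the whole point of the paper's argument is that it is not needed: the proof uses the already integrated estimate (5.1) of Seip--Youssfi, namely $\int_{\C}\rho(z,\zeta)|K(z,\zeta)|e^{-\frac12(\Psi(|z|^2)+\Psi(|\zeta|^2))}\,dA(\zeta)\le C_1$ with $\rho$ their distance, splits at $\rho(z,\zeta)\ge\delta$ (where the factor $\rho(z,\zeta)/\delta\ge1$ gives the bound $C_1/\delta$), and on $\rho(z,\zeta)<\delta$ uses Lemma 7.2 to pass to a Euclidean disc of radius $\asymp\delta(\Delta\Psi(|z|^2))^{-1/2}$ together with the trivial bound $|K(z,\zeta)|\le K(z,z)^{1/2}K(\zeta,\zeta)^{1/2}$ and the near-diagonal estimates of Lemmas 3.1--3.2. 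Your fallback suggestion, to reprove a decay estimate via integral means and Kriete-type Legendre--Fenchel asymptotics, is not a repair but a sketch of a separate project: in the disc case those $M_1(r,K)$ asymptotics (Proposition \ref{pr:intmean}) were only the first half of the work, and the resulting radial integral still required the delicate splitting carried out in the proof of Theorem \ref{th:5}. So as written, the $\mathcal{S}$ case rests on an unproved (and uncited) kernel estimate; either prove that estimate for the class $\mathcal{S}$ or use the integrated estimate (5.1) as the paper does.
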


\begin{proof}
We shall split the proof in two cases.
\\{\bf{Case $\mathbf{1}$.}} Assume that $\phi\in\mathcal{D}$.
 We shall follow ideas and the notation from \cite{marzo-ortega}.
Adapting the approach used in the proof of Lemma \ref{le:est} to the complex plane setting, we see that the result will follow once we can show that there is a positive constant $M$ such that
\begin{equation}\label{bp10}
\sup_{z\in\C}\int_{\C}\left|K(z,\zeta)\right|\,e^{-\phi(\zeta)-\phi(z)}\,dm(\z)\le M.
\end{equation}
In order to prove \eqref{bp10} we need to introduce a technical tool.
We denote by  $\rho(z)$ the positive radius for which we have $\mu(D(z,\rho(z)))=1,\, z\in\C$.
The function $\rho^{-2}$ can be regarded as a regularized version of $\Delta \phi$ (see \cite{{MarMasOrtGFA2003}}).
For $z\in\C$,  we introduce the notation $\tilde D_z=\{\z\in\C \ :\ |z-\zeta|<\rho(\z)\}$. Then we have
\begin{equation}\begin{split}\label{bp11}
&\int_{\C}\left|K(z,\zeta)\right|\,e^{-\phi(\zeta)-\phi(z)}\,dm(\z)
\\ &\int_{|z-\zeta|\ge\max\{\rho(z),\r(\z)\}}\left|K(z,\zeta)\right|\,e^{-\phi(\zeta)-\phi(z)}\,dm(\z)+
\int_{D(z,\r(z))\cup \tilde D_z}\left|K(z,\zeta)\right|\,e^{-\phi(\zeta)-\phi(z)}\,dm(\z)
\\ &= I+II.
\end{split}\end{equation}
Estimating $II$ is the easy part.
Combining \cite[Proposition 2.11]{marzo-ortega} with \cite[Corollary 3]{MarMasOrtGFA2003} we deduce
\begin{equation}\begin{split}\label{bp12}
&II\lesssim \int_{D(z,\r(z))\cup \tilde D_z}\frac{1}{\r(z)\r(\z)}\,dm(\z)
<C<\infty.
\end{split}\end{equation}
\par On the other hand, using  \cite[Theorem 1.1]{marzo-ortega},  \cite[ Lemmas 2 and 4]{{MarMasOrtGFA2003}}, together with \cite[Lemma 2.7]{marzo-ortega}, we get
\begin{equation*}\begin{split}
&I\lesssim \int_{|z-\zeta|\ge\max\{\rho(z),\r(\z)\}}\frac{\,dm(\z)}{\r(z)\r(\z)\exp(d^\ep_\phi(z,\z))}, \quad\text{for some $\ep>0$}
\\ & =\int_{|z-\zeta|\ge\max\{\rho(z),\r(\z)\}}\frac{\r(\z)}{\r(z)}\frac{dm(\z)}{\r^2(\z)\exp(d^\ep_\phi(z,\z))}
\\ & \lesssim \int_{|z-\zeta|\ge\max\{\rho(z),\r(\z)\}}\left(\frac{|z-\z|}{\r(z)}\right)^{1-\d}\frac{dm(\z)}{\r^2(\z)\exp(d^\ep_\phi(z,\z))}
\\ & \lesssim \int_{|z-\zeta|\ge\max\{\rho(z),\r(\z)\}}\frac{d^\g_\phi(z,\z)dm(\z)}{\r^2(\z)\exp(d^\ep_\phi(z,\z))},\quad\text{for some $\d\in (0,1)$}
\\ & \lesssim \int_{|z-\zeta|\ge\max\{\rho(z),\r(\z)\}}\frac{dm(\z)}{\r^2(\z)\exp(d^{\ep'}_\phi(z,\z))},\quad\text{for some $\ep'<\ep$}
\\ & \lesssim \int_{\C}\frac{d\mu(\z)}{\exp(d^{\ep'}_\phi(z,\z))}\le C<\infty,
\end{split}\end{equation*}
which together \eqref{bp11} and \eqref{bp12}
gives   \eqref{bp10}.
\medskip\par{\bf{Case $\mathbf{2}$.}} Assume that $\phi\in\mathcal{S}$.
 We shall follow ideas and the notation from \cite{SeiYouJGA2011}. Again, it is enough to prove \eqref{bp10} that is equivalent to
\begin{equation}\label{2}
\sup_{z\in\C}\int_{\C} |K(z,\zeta)|e^{-1/2(\Psi(|z|^2)+\Psi(|\zeta|^2))}dA(\zeta)\le C_2 <\infty.
\end{equation}
where $C_2$ is an absolute contant.

By \cite[(5.1)]{SeiYouJGA2011},
\begin{equation}\label{1}
\int_{\C}\rho(z,\zeta) |K(z,\zeta)|e^{-1/2(\Psi(|z|^2)+\Psi(|\zeta|^2))}dA(\zeta)\le C_1<\infty.
\end{equation}
where $C_1$ is an absolute constant and the distance $\rho$ is defined by \cite[(1.2)]{SeiYouJGA2011}.

So, by (\ref{1}), for any $\delta>0$
\begin{equation}\begin{split}\label{3}
&\int_{\rho(z,\zeta)\ge \delta} |K(z,\zeta)|e^{-1/2(\Psi(|z|^2)+\Psi(|\zeta|^2))}\,dA(\zeta)
\\ & \le \frac{1}{\delta}\int_{\rho(z,\zeta)\ge \delta} \rho(z,\zeta)|K(z,\zeta)|e^{-1/2(\Psi(|z|^2)+\Psi(|\zeta|^2))}\,dA(\zeta)\le \frac{C_1}{\delta}.
\end{split}\end{equation}
\par On the other hand, by \cite[Lemma $7.2$]{SeiYouJGA2011}, $\{\zeta:\rho(z,\zeta)<\delta \} \subset \{\zeta: |z-\zeta|\le \delta M (\Delta\left(\Psi(|z|^2)\right))^{-1/2}\}$
if $\delta$ is small enough. So, by \cite[Lemma $3.1$, Lemma $3.2$]{SeiYouJGA2011}
 \begin{equation*}\begin{split}
&\int_{\{\rho(z,\zeta)< \delta\}} |K(z,\zeta)|e^{-1/2(\Psi(|z|^2)+\Psi(|\zeta|^2))}\,dA(\zeta)
\\ &\le \int_{\{\zeta: |z-\zeta|\le \delta M (\Delta\left(\Psi(|z|^2)\right))^{-1/2}\}} |K_(z,\zeta)|e^{-1/2(\Psi(|z|^2)+\Psi(|\zeta|^2))}\,dA(\zeta)
\\ & \le \int_{\{\zeta: |z-\zeta|\le \delta M (\Delta\left(\Psi(|z|^2)\right))^{-1/2}\}} \left(K(z,z)\right)^{1/2}\left(K(\zeta,\zeta)\right)^{1/2}e^{-1/2(\Psi(|z|^2)+\Psi(|\zeta|^2))}\,dA(\zeta)
\\ & \lesssim \int_{\{\zeta: |z-\zeta|\le \delta M (\Delta\left(\Psi(|z|^2)\right))^{-1/2}\}} (\Delta\Psi(|z|^2))^{1/2}(\Delta\Psi(|\zeta|^2))^{1/2}\,dA(\zeta)
\\ & \lesssim  C,
\end{split}\end{equation*}
which together \eqref{3} gives (\ref{2}). This finishes the proof.
\end{proof}
Bearing in mind the above result, the dual spaces of weighted Fock spaces  $\mathcal{F}^{\phi}_{p}$ can be described.
\begin{corollary} Asumme that $\phi:\C\to \R^+$ belongs to $\mathcal{D}\cup\mathcal{S}$. For $p>1$, the dual of  $\mathcal{F}^{\phi}_{p}$ is $\mathcal{F}^{\phi}_{q}$,
under the pairing
\begin{equation}\label{pairingf}
\langle f,g \rangle =\int_\C f \bar g \, e^{-2\phi}dm,
\end{equation}
where $1/p+1/q=1$. Moreover, the dual of $\mathcal{F}^{\phi}_{1}$ is $\mathcal{F}^{\phi}_{\infty}$ under the pairing (\ref{pairingf}).
\end{corollary}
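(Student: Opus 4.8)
The plan is to repeat, essentially word for word, the argument that proves Corollary~\ref{dual}, with $\D$ replaced by $\C$, the weight $v$ replaced by $e^{-2\phi}$, and Theorem~\ref{th:5} replaced by Theorem~\ref{th:3}; write $q$ for the conjugate exponent of $p$.

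The inclusion of $\Fqf$ (resp.\ $\Fif$ when $p=1$) into the dual is soft: rewriting the pairing \eqref{pairingf} as $\langle f,g\rangle=\int_\C (fe^{-\phi})\,\overline{(ge^{-\phi})}\,dm$ and applying H\"older's inequality with exponents $p$ and $q$ gives $|\langle f,g\rangle|\le\|f\|_{\Fpf}\|g\|_{\Fqf}$, while for $p=1$ the bound $|\langle f,g\rangle|\le\|f\|_{\mathcal{F}^\phi_1}\|g\|_{L^\infty_\phi}$ is immediate. Hence every $g\in\Fqf$ (resp.\ $\Fif$) induces a bounded functional on $\Fpf$ through \eqref{pairingf}.

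For the converse, let $L\in(\Fpf)^\star$ and extend it by Hahn--Banach to $\tilde L$ on $L^p(\C,e^{-p\phi})$ with $\|\tilde L\|=\|L\|$. Since $f\mapsto fe^{-\phi}$ is an isometry of $L^p(\C,e^{-p\phi})$ onto $L^p(\C,dm)$, and $(L^p(\C,dm))^\star=L^q(\C,dm)$ (resp.\ $L^\infty(\C,dm)$ for $p=1$), there is $h\in L^q(\C,e^{-q\phi})$ (resp.\ $h\in L^\infty_\phi$), of norm equal to $\|L\|$, with
$$
Lf=\int_\C f\,\bar h\,e^{-2\phi}\,dm,\qquad f\in\Fpf.
$$
Now one moves $P_\phi$ onto $h$ exactly as in Corollary~\ref{dual}. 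This uses two facts: that $P_\phi f=f$ for every $f\in\Fpf$ — which holds because point evaluations are bounded on $\Fpf$, polynomials lie in $\Ftwof$ and are reproduced by $K$, and polynomials are dense in $\Fpf$; and that, as in Remark~\ref{sublinear}, the proof of Theorem~\ref{th:3} actually shows the sublinear operator $f\mapsto\int_\C |f(\z)|\,|K(z,\z)|\,e^{-2\phi(\z)}\,dm(\z)$ to be bounded from $L^p(\C,e^{-p\phi})$ to $\Fpf$ and from $L^\infty_\phi$ to $\Fif$, so that Fubini's theorem applies. This yields
$$
Lf=\int_\C P_\phi f\,\bar h\,e^{-2\phi}\,dm=\int_\C f(\z)\,\overline{P_\phi h(\z)}\,e^{-2\phi(\z)}\,dm(\z)=\langle f,P_\phi h\rangle ,
$$
and by Theorem~\ref{th:3} we have $P_\phi h\in\Fqf$ with $\|P_\phi h\|_{\Fqf}\lesssim\|L\|$ when $p>1$, and $P_\phi h\in\Fif$ with $\|P_\phi h\|_{\Fif}\lesssim\|L\|$ when $p=1$. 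Thus $L$ is represented through \eqref{pairingf} by an element of $\Fqf$ (resp.\ $\Fif$), which finishes the proof.

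The only point not already handled by Theorem~\ref{th:3} and the proof of Corollary~\ref{dual} is the verification that $P_\phi$ reproduces on all of $\Fpf$; but this is not a genuine obstacle, since the required boundedness is precisely the estimate $\sup_z\int_\C|K(z,\z)|e^{-\phi(\z)-\phi(z)}\,dm(\z)<\infty$ established inside the proof of Theorem~\ref{th:3}, and the density of polynomials in $\Fpf$ is available from the descriptions of these Fock spaces in the references underpinning that theorem. In short, all the analytic content is carried by Theorem~\ref{th:3}, and the corollary is a formal consequence of it.
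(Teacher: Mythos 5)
Your proposal is correct and follows exactly the route the paper intends: the authors omit the proof, stating it is analogous to that of Corollary~\ref{dual}, and your argument is precisely that adaptation (H\"older for the easy inclusion, Hahn--Banach plus the isometry $f\mapsto fe^{-\phi}$ for the representation, Fubini justified by the Fock-space analogue of Remark~\ref{sublinear}, and Theorem~\ref{th:3} to conclude $P_\phi h\in\Fqf$ or $\Fif$). Your extra care with the reproducing identity $P_\phi f=f$ on $\Fpf$ (via boundedness of $P_\phi$ and density of polynomials) is a detail the paper leaves implicit, and it is handled adequately.
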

The proof is analogous to that of Corollary \ref{dual}, it will be omitted.
\medskip

Finally, we  observe that the analogue of Zeytuncu's unboundedness result for the Bergman projection \cite[Theorem 1.2]{ZeyTams2012} remains true in the complex plane. More precisely, we have
\begin{theorem}\label{th:zeyFock}
If $w$ is a radial weight on $\C$ which satisfies
\par $(i)$\, $w$ is  $C^\infty([0,\infty))$,
\par $(ii)$\,$\lim_{r\to \infty} w^{(n)}(r)=0$ for any $n\in\N$,
\par $(iii)$\, for any $n\in\N$ there exists $a_n\in (0,\infty)$ such that $(-1)^nw^{(n)}$ is non-negative on the interval
$(a_n,\infty)$.
\par Then, for $0<p<\infty$, the Bergman projection  is bounded from  $L^p(\C,w )$ to  $L^p(\C,w )$only for $p=2$.
\end{theorem}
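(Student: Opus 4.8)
The plan is to establish the contrapositive: if $0<p<\infty$ and $p\neq 2$, then $P_w$ fails to be bounded from $L^p(\C,w)$ to $L^p(\C,w)$. When $p=2$ there is nothing to do, since $P_w$ is the orthogonal projection of the Hilbert space $L^2(\C,w)$ onto $A^2_w$. The range $0<p\le 1$ is disposed of at once by a concentration argument: applying $P_w$ to the indicator of a small disc $D(z_0,\delta)$ with $z_0\neq 0$ gives $P_wf\approx \delta^{2}w(z_0)K(\cdot,z_0)$, so that $\|P_wf\|_{L^p(w)}\asymp\delta^{2}w(z_0)\|K(\cdot,z_0)\|_{L^p(w)}$ while $\|f\|_{L^p(w)}\asymp(w(z_0)\delta^{2})^{1/p}$, whence the ratio is of order $\delta^{\,2(1-1/p)}$, which blows up as $\delta\to0^{+}$ when $p<1$; for $p=1$ one gets the same effect by letting $z_0\to\infty$ instead. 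So the real content is the range $1<p<\infty$, where I would use that the radiality of $w$ diagonalises $P_w$ under the rotation action.

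Writing $f(z)=g(|z|)(z/|z|)^{k}$ with $k\in\N\cup\{0\}$ and $g$ a radial profile, and expanding $K(z,\zeta)=\sum_{n\ge 0}z^{n}\overline{\zeta}^{\,n}/\|z^{n}\|_{A^2_w}^{2}$, a direct computation yields
$$P_w f(z)=\frac{z^{k}}{m_{2k}}\int_{0}^{\infty}g(\rho)\,\rho^{\,k+1}w(\rho)\,d\rho ,\qquad m_\lambda:=\int_0^\infty r^{\lambda+1}w(r)\,dr ,$$
so $P_w$ restricted to the $k$-th rotation eigenspace is a rank-one map onto $\C\,z^{k}$. Picking the extremal $g$ by $L^{p}$--$L^{p'}$ duality against the measure $\rho w(\rho)\,d\rho$ shows that boundedness of $P_w$ on $L^p(\C,w)$ forces
$$\sup_{k\ge 0}\ \frac{\|z^{k}\|_{L^{p}(\C,w)}\,\|z^{k}\|_{L^{p'}(\C,w)}}{\|z^{k}\|_{L^{2}(\C,w)}^{2}}<\infty ,\qquad \tfrac{1}{p}+\tfrac{1}{p'}=1 ,$$
equivalently $\sup_{k}\,m_{p'k}^{1/p'}m_{pk}^{1/p}/m_{2k}<\infty$. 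Now $\lambda\mapsto m_\lambda$ is log-convex, being a Laplace-type transform of the positive measure $r\,w(r)\,dr$, and since $2k=\tfrac{1}{p'}(p'k)+\tfrac{1}{p}(pk)$, H\"older already gives $m_{2k}\le m_{p'k}^{1/p'}m_{pk}^{1/p}$; hence the supremum above is always $\ge 1$, and the point is to show that for $p\neq 2$ it is in fact infinite.

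This last step is where hypotheses $(i)$--$(iii)$ come in, and it is the main obstacle. They force $w=e^{-2\phi}$ to decay ``flatly'' at infinity, which makes $\log m_\lambda$ so strongly convex that the H\"older defect
$$\tfrac{1}{p'}\log m_{p'k}+\tfrac{1}{p}\log m_{pk}-\log m_{2k}\ \asymp\ k^{2}\,(\log m)''(\lambda),\qquad \lambda\asymp k,$$
tends to $+\infty$. The cleanest route is Laplace's method: locating the maximum of $r\mapsto r^{\lambda+1}w(r)$ at $r_\lambda$, where $2r_\lambda\phi'(r_\lambda)=\lambda+1$, one obtains $\log m_\lambda=(\lambda+1)\log r_\lambda-2\phi(r_\lambda)+\og(\log\lambda)$ and, after differentiating this critical-point relation,
$$\lambda^{2}(\log m)''(\lambda)\ \asymp\ \frac{\lambda}{1+r_\lambda\phi''(r_\lambda)/\phi'(r_\lambda)} ,$$
which diverges as soon as $r_\lambda\phi''(r_\lambda)/\phi'(r_\lambda)=\op(\lambda)$ — on $[0,\infty)$ with $\phi'\uparrow\infty$ a consequence of the regularity built into $(i)$--$(iii)$, and for Proposition~\ref{th:2} precisely the instance $n=2$ of the flatness hypothesis, read off from $(i)$--$(iii)$ via the ``products of level $n$'' bookkeeping of Lemma~\ref{le:bp1}, as in the proof of Proposition~\ref{th:1}. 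The delicate part is to justify this directly from the bare qualitative hypotheses, which do not \emph{a priori} supply any Laplace asymptotics; the safe substitute, completely parallel to the estimate of $M_1(r,K)$ in the proof of Proposition~\ref{pr:intmean}, is to split each of the three moment integrals at the maximiser $r_k$ of $r^{2k+1}w(r)$, compare the three integrands on dyadic annuli about $r_k$ — using $(iii)$ to control the variation of $w$ and $(ii)$ to rule out power-type behaviour — and read off the divergence of the ratio. Once the displayed supremum is infinite the necessary condition is violated, the theorem follows, and Proposition~\ref{th:2} is then deduced from Theorem~\ref{th:zeyFock} exactly as Proposition~\ref{th:1} is from Theorem~\ref{th:zey}.
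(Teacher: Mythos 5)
Your reduction to the rotation eigenspaces is correct and is in fact the same skeleton as the argument the paper invokes: testing $P_w$ on $f(z)=g(|z|)(z/|z|)^k$ does show that boundedness on $L^p(\C,w)$ forces $\sup_k m_{pk}^{1/p}\,m_{p'k}^{1/p'}/m_{2k}<\infty$, with $m_\lambda=\int_0^\infty r^{\lambda+1}w(r)\,dr$, and the paper's own ``proof'' is precisely to run Zeytuncu's disc argument with $\int_0^1 r^{2x+1}w(r)\,dr$ replaced by $\int_0^\infty r^{2x+1}w(r)\,dr$. The genuine gap in your proposal is the decisive step: you never deduce from the qualitative hypotheses $(i)$--$(iii)$ that this ratio is unbounded when $p\neq2$, and that implication is the entire content of the theorem. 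Your Laplace-method computation smuggles in structure that Theorem \ref{th:zeyFock} does not assume: you write $w=e^{-2\phi}$ and control $r_\lambda\phi''(r_\lambda)/\phi'(r_\lambda)$, which is essentially condition \eqref{bp4f} of Proposition \ref{th:2} (via Lemma \ref{le:bp1}), whereas here one only has $w\in C^\infty$, $w^{(n)}\to0$ and eventual sign-alternation; these give neither a usable critical-point expansion of $\log m_\lambda$ nor the uniform control of $(\log m_\lambda)''$ on the whole interval between $p'k$ and $pk$ that your H\"older-defect asymptotics require. You acknowledge the problem and offer a ``safe substitute'' (dyadic splitting around the maximiser of $r^{2k+1}w(r)$), but no estimate is actually carried out there, so what remains proved is only the easy necessary-condition reduction, not the theorem. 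Moreover, one ingredient of that substitute is wrong as stated: on $[0,\infty)$, hypothesis $(ii)$ does \emph{not} rule out power-type behaviour (e.g.\ $w(r)=(1+r)^{-3}$ satisfies $(i)$--$(iii)$), unlike on $[0,1)$ where vanishing of all derivatives at the boundary does force faster-than-polynomial decay; so any quantitative argument must also invoke the implicit standing assumption that all moments $m_{2k}$ are finite and the kernel exists, and cannot lean on $(ii)$ in the way you indicate.

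Two smaller points. For $p=1$ your concentration argument rests on the unproved claim $\|K(\cdot,z_0)\|_{L^1(w)}\to\infty$ as $z_0\to\infty$; this is avoidable, since on the unbounded domain the same eigenspace computation already gives unboundedness for $p=1$: the functional $g\mapsto\int_0^\infty g(\rho)\rho^{k}\,\rho w(\rho)\,d\rho$ has norm $\sup_\rho\rho^k=\infty$ on $L^1(\rho w\,d\rho)$, so radial-type test functions concentrated near a large radius $\rho_0$ make the ratio blow up like $\rho_0^{k}$ with no kernel estimates; a similar concentration (shrinking the support) handles $0<p<1$, where you should also say in what sense ``bounded'' is meant, since duality is unavailable there. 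These ranges are fixable; the unresolved case, and the heart of the matter, is $1<p<\infty$, $p\neq2$, where your proposal defers exactly the step that Zeytuncu's argument (which the paper tells the reader to transplant) performs from the sign and decay hypotheses.
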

A proof can be obtained by mimicking that of Theorem \cite[Theorem 1.2]{ZeyTams2012}, where  $\int_0^1 r^{2x+1}w(r)dr$ must be replaced by
$\int_0^\infty r^{2x+1}w(r)dr$, so it will be omitted.

\par  With this result in our hands, Proposition \ref{th:2} can be proved analogously to  Proposition \ref{th:1}.

\end{document}